\newtheorem{theorem}{Theorem}
\newtheorem{lemma}[theorem]{Lemma}
\newtheorem{prop}[theorem]{Proposition}
\newenvironment{proof}{\textit{Proof:}}{$\square$}
\newcommand{\pdiv}{\mid\!\mid}
\DeclareMathOperator{\rad}{rad}
\newcommand{\Mod}[1]{\ \left(\mathrm{mod}\ #1\right)}
\title{On the Radical of Multiperfect Numbers and Applications} 
\author{Viraj Jayam, Ajit Kadaveru, Nithin Kavi, and Xinyi Zhang}
\date{}
\begin{document}

\maketitle

\begin{abstract}

It is conjectured that for a perfect number $m,$ $\rad \left(m\right)\ll m^{\frac{1}{2}}.$ We prove bounds on the radical of {\it multiperfect} number $m$ depending on its abundancy index. Assuming the ABC conjecture, we apply this result to study gaps between multiperfect numbers, multiperfect numbers represented by polynomials. Finally, we prove that there are only finitely many multiperfect multirepdigit numbers in any base $g$ where the number of digits in the repdigit is a power of $2.$ This generalizes previous works of several authors including O. Klurman, F. Luca, P. Polack, C. Pomerance and others.

\end{abstract}

\tableofcontents

\section{Introduction}

A positive integer $n$ is perfect if $\sigma\left(n\right) = 2n$, where $\sigma\left(n\right)$ is the sum of all the positive integer divisors of $n$. Perfect numbers have been studied for many years since Euclid proved a formation rule whereby ${2}^{p-1}\left(2^{p}-1\right)$ is an even perfect number whenever $2^{p -1}$ is prime. He also showed that if an odd perfect number were to exist, it would be in the form $p^{\alpha}d^2,$ where $p \equiv \alpha \equiv 1 \pmod 4.$ In the study of perfect numbers, there are still two famous questions that remain wide open. First, whether there are infinitely many  perfect numbers, and second, whether there exists an odd perfect number. Though not many positive results have come with these two problems, several authors have shown that there are finitely many perfect numbers with certain properties. For example, Pollack proved this for numbers with all identical digits in \cite{lucapollack}, and Luca proved this for Fibonacci numbers in \cite{luca}. More up-to-date results and notions related to perfect numbers are introduced in \cite{Sandor}.
    $\\ \\$ Luca and Pomerance related the ABC-conjecture to the study of perfect numbers in \cite{lucap}. They first proved that $\rad\left(n\right) < 2n^{\frac{17}{26}}$. This allowed them to apply the ABC-conjecture to make conclusions about the gaps between perfect numbers. Acquaah and Konyagin gave a simple proof of a slightly weaker inequality: $\rad\left(n\right) < 2n^{\frac{2}{3}}$ in \cite{acquaahkonyagin}. Since then, Klurman proved a stronger bound in \cite{klurman}, namely $\rad\left(n\right) < 2n^{\frac{9}{14}}$. Though not proven yet, it is conjectured that $\rad\left(n\right) < 2n^{\frac{1}{2}}$. In fact, Ellia in \cite{Ellia} along with Ochem and Rao in \cite{Ochem} showed that if $\rad\left(n\right) < n^{\frac{1}{2}}$, then the special prime $p$ with odd exponent must be greater $222$, and greater than than $148207$ if $3 \nmid n$.
$\\ \\$ 
    A positive integer $m$ is multiperfect (or multiply perfect) if $\sigma\left(m\right)=km$, where $k > 2$ is referred to as the abundancy of $m$, and $m$ is called $k$-perfect. A method for determining up to 1,000,000,000 multiperfect numbers was first introduced by Carmichael \cite{RD} in 1907. Website \cite{website} provides the latest database for all known multiperfect numbers.
$\\ \\$ 
    In this paper, we prove a bound on the radical of multiperfect number through the following two theorems:
    
    \textbf{Theorem 1.} Suppose $m$ is an odd multiperfect number such that $\sigma\left(m\right) = km.$ Then, if $k$ is odd, we have that $\rad\left(m\right) \leq \sqrt{m}.$ If $k \equiv 2 \pmod 4,$ we have $\rad\left(m\right) < m^{\frac{9}{14}}.$ Otherwise, suppose $k = 2^nt$, where $t$ is odd and $n \geq 2.$ We have \[\rad\left(m\right) < m^\frac{4n + 1}{4n + 4}.\]
    
    \textbf{Theorem 2.} Suppose $m$ is an even multiperfect number such that $\sigma\left(m\right) = km.$ Then if $\nu_2\left(m\right) = n$ and $\nu_2\left(k\right) = \alpha,$ we have \[\rad m < m^\frac{2n + 2\alpha + 1}{2n + 2 \alpha + 2}.\]

 We then use this radical bound to show that the ABC-conjecture implies that generic polynomials must have finitely many $k$-perfect numbers in their range of outputs. This generalizes the work of Klurman~\cite{klurman}, who established similar result for perfect numbers (the case $k=1$). Finally, we prove an extension of Pollack's result with numbers having identical digits in \cite{Pollack}, to multiperfect numbers with multirepdigits. 

\section*{Acknowledgments}

This research was conducted at the 2018 AwesomeMath Summer Program. We would like to thank Dr. Oleksiy Klurman for being our research advisor and Mr. George Catalin Turcas for providing helpful feedback on our work.
    

\section{Preliminary Lemmas}

\begin{lemma}\label{lem1}
A number of the form $p_1 \cdot p_2 \cdot p_3^2 \cdot p_4^2$ for odd primes $p_1, p_2, p_3, p_4$ is not a $k$-perfect number if $4 \mid k.$
\end{lemma}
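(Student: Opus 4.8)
The plan is to defeat the hypothesis $4 \mid k$ on size grounds, by bounding the abundancy index $\sigma(n)/n$ of $n = p_1 p_2 p_3^2 p_4^2$ from above and observing that it is far too small to allow $k \geq 4$. Since $\sigma$ is multiplicative and the four primes are distinct, I would first record
\[
\sigma(n) = (1+p_1)(1+p_2)(1+p_3+p_3^2)(1+p_4+p_4^2),
\qquad
\frac{\sigma(n)}{n} = \frac{1+p_1}{p_1}\cdot\frac{1+p_2}{p_2}\cdot\frac{1+p_3+p_3^2}{p_3^2}\cdot\frac{1+p_4+p_4^2}{p_4^2}.
\]

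A tempting first move is a $2$-adic valuation argument, since for odd $n$ the condition $4 \mid k$ is exactly $\nu_2(\sigma(n)) \geq 2$. This, however, turns out to be a red herring: the factors $1+p_3+p_3^2$ and $1+p_4+p_4^2$ are odd, while $1+p_1$ and $1+p_2$ are each even, so $\nu_2(\sigma(n)) \geq 2$ holds automatically and produces no contradiction whatsoever. The divisibility condition is therefore vacuous, and the work must be done by estimating the size of the abundancy index instead.

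The key step is the elementary prime-power bound $\sigma(p^a)/p^a = (p^{a+1}-1)/\bigl(p^a(p-1)\bigr) < p/(p-1)$, which applied to each factor gives $\sigma(n)/n < \prod_{i=1}^{4} p_i/(p_i-1)$. Because $x \mapsto x/(x-1)$ is decreasing and the $p_i$ are distinct odd primes, the right-hand side is largest when the primes are as small as possible, namely $3,5,7,11$, so that
\[
\frac{\sigma(n)}{n} < \frac{3}{2}\cdot\frac{5}{4}\cdot\frac{7}{6}\cdot\frac{11}{10} = \frac{77}{32} < 3.
\]
Consequently, if $n$ were $k$-perfect then $k = \sigma(n)/n < 3$, whereas $4 \mid k$ forces $k \geq 4$, a contradiction. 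The only point needing a word of care is the distinctness of the four primes implicit in the notation: should two of them coincide, $n$ has fewer than four distinct prime divisors, the product $\prod p_i/(p_i-1)$ is only smaller, and the same conclusion follows a fortiori. I do not anticipate any genuine obstacle here — the whole content is recognizing that the $4 \mid k$ hypothesis cannot be attacked via valuations and must be ruled out by the abundancy estimate.
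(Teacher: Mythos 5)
Your proof is correct and follows essentially the same route as the paper: both divide $\sigma(n)=kn$ by $n$ and bound the abundancy index $\sigma(n)/n$ above by evaluating the local factors at the smallest admissible odd primes, contradicting $k\geq 4$. The only cosmetic difference is that you use the full geometric-series bound $p/(p-1)$ with distinct primes $3,5,7,11$ to get $\sigma(n)/n<77/32<3$, while the paper bounds the truncated factors $\left(1+\tfrac{1}{p}\right)$ and $\left(1+\tfrac{1}{p}+\tfrac{1}{p^2}\right)$ separately (via $\tfrac{4}{3}\cdot\tfrac{6}{5}\cdot\tfrac{31}{25}\cdot\tfrac{57}{49}<4$), so your constant is marginally sharper but the argument is the same.
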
 
\begin{proof} Since $k$ is even, let $k = 2^n \cdot t$, where $t$ is odd. Then we have the equation $2^{n} \cdot t \cdot p_{1} \cdot p_{2} \cdot {p_{3}}^{2} \cdot {p_{4}}^2 = \left(1 + p_1\right)\left(1 + p_2\right)\left(1 + p_3 + p_3^2\right)\left(1 + p_4 + p_4^2\right).$ If we divide both sides by $p_1p_2p_3^2p_4^2,$ we get:

$$2^nt = \left(1 + \frac{1}{p_1}\right)\left(1 + \frac{1}{p_2}\right)\left(1 + \frac{1}{p_3} + \frac{1}{p_3^2}\right)\left(1 + \frac{1}{p_4} + \frac{1}{p_4^2}\right) < \frac{4}{3} \cdot \frac{6}{5} \cdot \frac{31}{25} \cdot \frac{57}{49} < 4.$$ However, since $4|k$, we have that $n \geq 2,$ so this is impossible. 
\end{proof}

\begin{lemma}\label{2}

A number of the form $p_1 \cdot p_2 \cdot p_3^2 \cdot p_4^2 \cdot p_5^2$ is not a $k$-perfect number for any $k$ divisible by $4.$

\end{lemma}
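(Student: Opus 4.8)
The plan is to mimic the proof of Lemma~\ref{lem1} but to be more careful, since a naive product bound will no longer suffice. As before, writing $k=2^n t$ with $t$ odd and $4 \mid k$ forces $n \geq 2$, so I must show that for a number $m = p_1 p_2 p_3^2 p_4^2 p_5^2$ the abundancy $\sigma(m)/m$ cannot be an even integer divisible by $4$; in fact I will argue it cannot reach $4$ at all unless the primes are extremely small, and then rule out the small cases directly. Dividing through by $m$ gives
\[
\frac{\sigma(m)}{m} = \left(1 + \frac{1}{p_1}\right)\left(1 + \frac{1}{p_2}\right)\prod_{i=3}^{5}\left(1 + \frac{1}{p_i} + \frac{1}{p_i^2}\right),
\]
and the task is to decide when this product can be an integer multiple of $4$.

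First I would record the crude upper bound obtained by taking the five smallest odd primes, i.e.\ $p_1,p_2,p_3,p_4,p_5 \in \{3,5,7\}$ assigned optimally; here the key difference from Lemma~\ref{lem1} is that with a third squared factor the maximal product can exceed $4$, so the one-line contradiction is unavailable. Consequently the argument splits into two regimes. If all the relevant primes are large enough that the product stays below $4$, we are done immediately by the same reasoning as in Lemma~\ref{lem1}. Otherwise at least one prime is small (most importantly $3$ must divide $m$, and possibly $5,7$ as well), and I would handle these finitely many configurations by hand.

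The main obstacle, and the heart of the proof, is the small-prime casework. I would proceed by a $2$-adic valuation argument: for $4 \mid \sigma(m)$ one needs $\nu_2(\sigma(m)) \geq 2$, and $\sigma$ is multiplicative, so $\nu_2(\sigma(m)) = \nu_2(1+p_1) + \nu_2(1+p_2) + \sum_{i=3}^5 \nu_2(1 + p_i + p_i^2)$. Since each $p_i$ is odd, $1+p_i+p_i^2$ is always odd, contributing nothing; thus all the $2$-adic content must come from $(1+p_1)(1+p_2)$, and I would combine the divisibility constraints these impose on $p_1,p_2 \pmod 4$ with the size constraints forcing small primes. The expectation is that the divisibility requirement $4 \mid (1+p_1)(1+p_2)$ together with the bound on the product leaves only a short explicit list of $(p_1,\dots,p_5)$ to check, each of which fails to give an integer abundancy or fails $4 \mid k$; a clean accounting of these cases is where the real work lies.
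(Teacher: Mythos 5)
There is a genuine gap, and it lies in your central premise: the claim that ``with a third squared factor the maximal product can exceed $4$'' is simply false, and the entire two-regime structure of your proposal is built on it. The product $\left(1+\frac{1}{p_1}\right)\left(1+\frac{1}{p_2}\right)\prod_{i=3}^{5}\left(1+\frac{1}{p_i}+\frac{1}{p_i^2}\right)$ over distinct odd primes is maximized by putting the squared factors on the smallest primes, and even then it equals $\frac{13}{9}\cdot\frac{31}{25}\cdot\frac{57}{49}\cdot\frac{12}{11}\cdot\frac{14}{13}\approx 2.45$, nowhere near $4$; even if one were to permit $p_i=2$, the maximum is $\frac{7}{4}\cdot\frac{13}{9}\cdot\frac{31}{25}\cdot\frac{8}{7}\cdot\frac{12}{11}\approx 3.91<4$. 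So your ``otherwise'' branch is empty: the one-line contradiction of Lemma \ref{lem1} goes through verbatim, and that is exactly what the paper does, bounding the five factors by $\frac{4}{3}\cdot\frac{6}{5}\cdot\frac{31}{25}\cdot\frac{57}{49}\cdot\frac{133}{121}<4$ and contradicting $n\geq 2$, i.e.\ $k\geq 4$.

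Even taken on its own terms the proposal is not a proof, because the small-prime casework you identify as ``where the real work lies'' is only promised, never executed, and the heuristic guiding it points the wrong way. Your $2$-adic bookkeeping is correct as far as it goes ($1+p_i+p_i^2$ is odd for odd $p_i$, so $\nu_2\left(\sigma\left(m\right)\right)=\nu_2\left(1+p_1\right)+\nu_2\left(1+p_2\right)$), but the condition $4\mid\left(1+p_1\right)\left(1+p_2\right)$ is satisfied by infinitely many prime pairs --- any $p_1\equiv 3\pmod{4}$ alone suffices, as do two primes $\equiv 1\pmod{4}$ --- so divisibility constraints cannot reduce the problem to ``a short explicit list'' of tuples; the contradiction here is purely Archimedean (the abundancy is too small to be a multiple of $4$), not $2$-adic. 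The fix is short: delete the casework, verify the product bound $<4$ directly as above, and conclude as in Lemma \ref{lem1}.
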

\begin{proof} Since $k$ is even, let $k = 2^n \cdot t$, where $t$ is odd. Then we have the equation \[2^{n} \cdot t \cdot p_{1} \cdot p_{2} \cdot {p_{3}}^{2} \cdot {p_{4}}^2 \cdot p_5^2 = \left(1 + p_1\right)\left(1 + p_2\right)\left(1 + p_3 + p_3^2\right)\left(1 + p_4 + p_4^2\right)\left(1 + p_5 + p_5^2\right).\] If we divide both sides by $p_1p_2p_3^2p_4^2p_5^2,$ we get:

$$2^nt = \left(1 + \frac{1}{p_1}\right)\left(1 + \frac{1}{p_2}\right)\left(1 + \frac{1}{p_3} + \frac{1}{p_3^2}\right)\left(1 + \frac{1}{p_4} + \frac{1}{p_4^2}\right)\left(1 + \frac{1}{p_5} + \frac{1}{p_5^2}\right) $$ $$< \frac{4}{3} \cdot \frac{6}{5} \cdot \frac{31}{25} \cdot \frac{57}{49} \cdot \frac{133}{121} < 4.$$ However, since $4|k$, we have that $n \geq 2,$ so this is impossible. \end{proof}

\begin{lemma}\label{3}
For a prime number $p$ and odd number $e$, we have $\nu_2\left(\sigma\left(p^e\right)\right) = \nu_2\left(e + 1\right) + \nu_2\left(p + 1\right) - 1$.
\end{lemma}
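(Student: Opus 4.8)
The plan is to reduce the claim to a single 2-adic valuation computation and then evaluate it by an elementary factorization. Recall that $\sigma(p^e) = 1 + p + \cdots + p^e = \frac{p^{e+1}-1}{p-1}$. Since $p - 1$ divides $p^{e+1}-1$, this gives $\nu_2(\sigma(p^e)) = \nu_2(p^{e+1}-1) - \nu_2(p-1)$, so everything comes down to understanding $\nu_2(p^{e+1}-1)$. I would first dispose of the prime $p = 2$, where $\sigma(2^e) = 2^{e+1}-1$ is odd; the interesting and relevant case (the one used later for the $\nu_2$ bookkeeping in Theorems 1 and 2) is $p$ odd, which I assume henceforth. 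The one structural fact I will lean on throughout is that $e$ odd forces $e+1$ to be even.

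Writing $e + 1 = 2^a m$ with $m$ odd and $a = \nu_2(e+1) \geq 1$, I would factor the numerator telescopically as
$$p^{e+1} - 1 = p^{2^a m} - 1 = (p^m - 1)(p^m + 1)(p^{2m}+1)(p^{4m}+1)\cdots(p^{2^{a-1}m}+1),$$
so that $\nu_2(p^{e+1}-1) = \nu_2(p^m-1) + \sum_{j=0}^{a-1}\nu_2\big(p^{2^j m}+1\big)$. The next step is to read off the valuation of each factor separately.

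For the two outer factors I would use that $m$ is odd: from $p^m - 1 = (p-1)(p^{m-1}+\cdots+1)$ and $p^m + 1 = (p+1)(p^{m-1}-\cdots+1)$, the cofactors are sums of $m$ odd terms, hence odd, giving $\nu_2(p^m-1) = \nu_2(p-1)$ and $\nu_2(p^m+1) = \nu_2(p+1)$. For each inner factor with $j \geq 1$, the exponent $2^j m$ is even, so $p^{2^j m}$ is an odd perfect square and therefore $\equiv 1 \pmod 8$; hence $p^{2^j m} + 1 \equiv 2 \pmod 8$, which pins down $\nu_2(p^{2^j m}+1) = 1$ exactly. Summing, $\nu_2(p^{e+1}-1) = \nu_2(p-1) + \nu_2(p+1) + (a - 1)$, and subtracting $\nu_2(p-1)$ yields $\nu_2(\sigma(p^e)) = \nu_2(p+1) + a - 1 = \nu_2(p+1) + \nu_2(e+1) - 1$, as claimed. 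Equivalently, this is exactly the Lifting-the-Exponent identity $\nu_2(p^n-1) = \nu_2(p-1)+\nu_2(p+1)+\nu_2(n)-1$ for odd $p$ and even $n$, which one could simply cite.

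The main obstacle, modest as it is, lies in the last evaluation: showing that each higher factor $p^{2^j m}+1$ with $j \geq 1$ contributes precisely one factor of $2$ — neither more nor fewer — and that the outer factors absorb exactly $\nu_2(p\pm 1)$. Both points hinge on $p$ being odd (used via the $\equiv 1 \pmod 8$ fact for odd squares and via the parity of sums of odd numbers of odd terms) and on the parity split $e+1 = 2^a m$. Once these local contributions are fixed, the telescoping sum collapses immediately to the stated formula.
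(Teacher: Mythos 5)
Your proof is correct and follows essentially the same route as the paper: write $e+1 = 2^a m$ with $m$ odd, telescope $p^{e+1}-1$ into $(p^m-1)\prod_{j=0}^{a-1}\bigl(p^{2^j m}+1\bigr)$, observe that the outer cofactors are sums of an odd number of odd terms (hence odd) and that each inner factor contributes exactly one factor of $2$ because an odd square is $1 \bmod 4$ (you use $\bmod 8$, which also works). Your write-up is in fact cleaner than the paper's, which contains sign and index typos in the product and a garbled verification that $\nu_2(p^m+1)=\nu_2(p+1)$, and you rightly flag the restriction to odd $p$, which the paper leaves implicit even though the stated formula fails for $p=2$ (e.g.\ $p=2$, $e=3$).
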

\begin{proof}
Let us consider two separate cases: when $e$ is even and when it is odd. If $e$ is even, 
\[
\sigma\left(p^e\right) = 1 + p + p^2 + \cdots + p^{e-1} + p^{e} \equiv 1 + 1 + \cdots + 1 \equiv 1 \mod{2}, 
\]
This clearly contradicts the lemma. Our next case is when $e$ is odd. We may express $e + 1 = 2^r \cdot s$, for some $r$ and odd $s$. Thus $\sigma\left(p^e\right)$ can be expressed as below:
\[
\sigma\left(p^e\right) = \frac{p^{e+1} - 1}{p - 1} = \frac{p^s - 1}{p-1}\prod_{1 \leq i \leq {r-1}}{p^{s \cdot 2^i}- 1}.
\]
For every $p^{2^j * s} + 1$ term, where $i \geq 1$, $p^{2^j \cdot s} \equiv_4 1$. Therefore, $2 \parallel p^{2^j \cdot s} + 1$. We now know that since $s$ is odd, we have
\[
\frac{p^s - 1}{p - 1} = p^{s-1} + p^{s - 2} + \cdots + p + 1 \equiv_2 1 + 1 + \cdots + 1 \equiv_2 1. 
\]
If we let 
\[
p = \left(2^k \cdot t\right)^s + 1 = \left(2^j \cdot t\right)^s - \left(2^j \cdot t\right)^{s - 1} \left(2^j \cdot t\right)^{s - 2} + \cdots + \left(2^j \cdot t\right)^1
\]
Thus, we have that $j = \nu_2\left(p + 1\right)$, and $2 \parallel p^s + 1$. Finally, we conclude that $\nu_2\left(\sigma\left(p^e\right)\right) = \nu_2\left(p + 1\right) + \nu_2\left(e + 1\right) - 1$. \end{proof}

\begin{lemma}\label{4} Suppose $m$ is a $k$-perfect number, where $k$ is even and $m$ is odd. Then if $r$ is the number of distinct prime factors of $m$, we have that $\nu_2\left(k\right) < \frac{1}{3}r$ if $r \geq 4.$

\end{lemma}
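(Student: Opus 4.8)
The plan is to derive the valuation bound from a purely elementary \emph{size} estimate on the abundancy $k$, rather than from the exact formula of Lemma~\ref{3}. Note first that summing Lemma~\ref{3} over the prime-power factors of $m$ yields only a \emph{lower} bound, $\nu_2(k)\ge \#\{i: a_i\text{ odd}\}$, which is the wrong direction for us. Instead I would exploit that $2^{\nu_2(k)}$ divides the positive integer $k$, so $2^{\nu_2(k)}\le k$; hence it suffices to prove the clean size bound $k<2^{r/3}$ for $r\ge 4$, after which $\nu_2(k)<r/3$ follows at once. This reduces the lemma to controlling how large a multiperfect abundancy can be in terms of the number of prime factors.

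Next I would bound $k$ by a product over the primes dividing $m$. Writing $m=\prod_{i=1}^r p_i^{a_i}$ with the $p_i$ distinct odd primes, multiplicativity of $\sigma$ gives
\[
k=\frac{\sigma(m)}{m}=\prod_{i=1}^r\frac{\sigma\!\left(p_i^{a_i}\right)}{p_i^{a_i}}=\prod_{i=1}^r\left(1+\frac{1}{p_i}+\cdots+\frac{1}{p_i^{a_i}}\right)<\prod_{i=1}^r\frac{p_i}{p_i-1}.
\]
Since $x\mapsto x/(x-1)$ is decreasing, the right-hand product is maximized when the $p_i$ are the $r$ smallest odd primes; writing these as $q_1=3,\,q_2=5,\,q_3=7,\dots$, this gives $k<\prod_{j=1}^r \frac{q_j}{q_j-1}$.

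It then remains to prove $\prod_{j=1}^r \frac{q_j}{q_j-1}\le 2^{r/3}$ for all $r\ge 4$, which I would do by induction on $r$. The base case $r=4$ is the numerical check $\frac32\cdot\frac54\cdot\frac76\cdot\frac{11}{10}=\frac{1155}{480}<2^{4/3}$. For the inductive step, multiplying the product by $\frac{q_{r+1}}{q_{r+1}-1}$ and using $q_{r+1}\ge 5$, hence $\frac{q_{r+1}}{q_{r+1}-1}\le\frac54<2^{1/3}$, shows the left side grows by a factor below $2^{1/3}$, matching the passage from $2^{r/3}$ to $2^{(r+1)/3}$. Chaining the three displays yields $2^{\nu_2(k)}\le k<2^{r/3}$, and therefore $\nu_2(k)<r/3$.

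The one delicate point, and the step I would verify most carefully, is that the constant $2^{1/3}$ and the threshold $r\ge 4$ are forced to line up exactly. The per-prime bound $\frac{p}{p-1}\le 2^{1/3}$ only barely holds at the first admissible prime, since $\frac54=1.25<1.2599\ldots=2^{1/3}$, and the base case at $r=4$ is precisely the first point where the partial product drops below $2^{r/3}$: at $r=3$ the product equals $2.1875$, which exceeds $2^{1}=2$, so the estimate genuinely fails there (consistent with $k=2$, $\nu_2(k)=1=r/3$). This explains why the hypothesis $r\ge 4$ is exactly what is needed; the remaining computations are routine.
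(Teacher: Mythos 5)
Your proof is correct and follows essentially the same route as the paper: both bound $k=\sigma(m)/m$ strictly by $\prod p_i/(p_i-1)$ over the $r$ smallest odd primes and then compare this product to an exponential in $r$ using the per-prime bound $\frac{5}{4}$ (the paper via $k<\left(\frac{5}{4}\right)^{r}$ and $\log_2\frac{5}{4}<\frac{1}{3}$, you via induction against $2^{r/3}$ with base case $r=4$, which is the same numerical content repackaged). Your preliminary reduction $2^{\nu_2(k)}\le k$ is exactly the paper's step $2^n\le 2^n t$, so no genuinely new idea or gap is present.
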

\begin{proof}
Letting $k = 2^nt$ where $t$ is odd, we have the equation: $$2^ntp_1^{\alpha_1} \ldots p_r^{\alpha_r} = \left(1 + p_1 + \ldots + p_1^{\alpha_1}\right)\left(1 + p_2 + \ldots + p_2^{\alpha_2}\right)\ldots \left(1 + p_r + \ldots + p_r^{\alpha_r}\right).$$ We divide both sides by $p_1^{\alpha_1} \ldots p_r^{\alpha_r}$ to get:

$$2^nt = \left(1 + \frac{1}{p_1} + \ldots + \frac{1}{p_1^{\alpha_1}}\right)\left(1 + \frac{1}{p_2} + \ldots + \frac{1}{p_2^{\alpha_2}}\right) \ldots \left(1 + \frac{1}{p_r} + \ldots + \frac{1}{p_r^{\alpha_r}}\right).$$ Suppose that each of these series were infinite. We have:

\[ \left(1 + \frac{1}{p_1} + \ldots + \frac{1}{p_1^{\alpha_1}}\right)\left(1 + \frac{1}{p_2} + \ldots + \frac{1}{p_2^{\alpha_2}}\right) \ldots \left(1 + \frac{1}{p_r} + \ldots + \frac{1}{p_r^{\alpha_r}}\right) \] \[ < \left(1 + \frac{1}{p_1} + \ldots \right)\left(1 + \frac{1}{p_2} + \ldots \right) \ldots \left(1 + \frac{1}{p_r} + \ldots \right)\] \[ \leq \frac{3}{2} \cdot \frac{5}{4} \cdot \frac{7}{6} \cdot \frac{11}{10} \cdot \frac{5}{4} \cdot \ldots \cdot \frac{5}{4} \leq \left(\frac{5}{4}\right)^{r - 3} \cdot \frac{3}{2} \cdot \frac{7}{6} \cdot \frac{11}{10} < \left(\frac{5}{4}\right)^r. \] From this, we get $2^n \leq 2^nt < \left(\frac{5}{4}\right)^r.$ Thus, we have that $n < r \log_2{1.25} < \frac{1}{3}r$ as desired. \end{proof}

\begin{lemma}\label{LOOPY}

Suppose $A = k_{e}k_{e - 1} \ldots k_1$ and $B = k_ek_{e - 1} \ldots k_2 + k_ek_{e - 1} \ldots k_3 + \ldots + k_ek_{e - 1} + k_e$ where $k_i \geq 2.$ Then if the integer $p_y \geq 3 \cdot 2^e,$ it is impossible to have $Ap_y - B \mid p_y^2 + p_y + 1.$ 

\end{lemma}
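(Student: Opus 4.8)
The plan is to argue by contradiction: suppose $d := Ap_y - B$ does divide $p_y^2 + p_y + 1$, and derive $p_y < 3\cdot 2^e$. The engine of the proof is the polynomial identity obtained by dividing $A^2(X^2 + X + 1)$ by $AX - B$ and evaluating at $X = p_y$, namely
\[
A^2\left(p_y^2 + p_y + 1\right) = \left(Ap_y + A + B\right)\left(Ap_y - B\right) + \left(A^2 + AB + B^2\right).
\]
Since $d$ divides the left-hand side and the first term on the right, it must divide $A^2 + AB + B^2$. Writing $q_1 = (A^2+AB+B^2)/d \ge 1$ and $q = (p_y^2+p_y+1)/d \ge 1$, the identity collapses to the clean relation $A^2 q = Ap_y + A + B + q_1$.

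From this relation I would read off the crucial divisibility $A \mid B + q_1$. This is the step I expect to carry the whole argument: it lets me write $B + q_1 = Ac$ for a positive integer $c$, so that $q_1 = Ac - B$. Substituting this back into $q_1\,d = A^2 + AB + B^2$ and simplifying (the factor $A$ cancels throughout) yields an exact formula for $p_y$ in terms of $A$, $B$, and $c$:
\[
p_y = \frac{A + B + Bc}{Ac - B}.
\]
Everything then reduces to bounding this expression.

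For the size estimates I would first record that $k_i \ge 2$ forces $B < A$; more precisely, factoring $A = \prod_{i=1}^e k_i$ gives $B/A = \sum_{\ell=1}^{e-1}(k_1\cdots k_\ell)^{-1} \le \sum_{\ell \ge 1} 2^{-\ell}$, whence $A - B \ge A\cdot 2^{-(e-1)}$. In the case $c = 1$ the formula becomes $p_y = (A + 2B)/(A - B)$, and combining $A + 2B < 3A$ with $A - B \ge A\,2^{-(e-1)}$ gives $p_y < 3\cdot 2^{e-1} < 3\cdot 2^e$. In the case $c \ge 2$, the denominator satisfies $Ac - B > A(c-1)$ while the numerator is $< A(c+2)$, so $p_y < (c+2)/(c-1) \le 4 < 3\cdot 2^e$. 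Either way $p_y < 3\cdot 2^e$, contradicting the hypothesis.

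The main obstacle is really the passage from the two size-free divisibilities to the exact value of $p_y$: a crude comparison of magnitudes (e.g.\ just using $d \le A^2 + AB + B^2 < 3A^2$) is too lossy and only settles the range $p_y > 3A$, leaving the genuinely delicate window $3\cdot 2^e \le p_y \le 3A$ when the $k_i$ are not all equal to $2$. It is the arithmetic input $A \mid B + q_1$, rather than any inequality, that closes this window, and extracting that divisibility in usable form (via the relation $A^2 q = Ap_y + A + B + q_1$) is where the care is needed. As a sanity check one can note the boundary configuration $e = 2,\ k_1 = 2,\ k_2 = 3$, for which $Ap_y - B = 21 = p_y^2 + p_y + 1$ at $p_y = 4$: the divisibility does occur, but only for $p_y = 4$, comfortably below the threshold $3\cdot 2^2 = 12$, which is precisely the phenomenon that the bound $A - B \ge A\,2^{-(e-1)}$ is quantifying.
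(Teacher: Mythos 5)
Your proof is correct, but it takes a longer (and in one respect stronger) route than the paper's. Both arguments run the Euclidean algorithm on $p_y^2 + p_y + 1$ against $Ap_y - B$; the paper simply stops one step earlier than you do. It derives $Ap_y - B \mid Bp_y + A + B$, and since the right-hand side is positive this immediately gives the inequality $Ap_y - B \leq Bp_y + A + B$, hence $p_y \leq \frac{A + 2B}{A - B}$; the same geometric-series estimate on $B/A$ that you use then yields $p_y \leq 3 \cdot 2^e - 2 < 3\cdot 2^e$ with no case analysis at all. You instead eliminate $p_y$ completely, passing to the resultant-type constant $A^2 + AB + B^2$ (note the identity $A\left(Bp_y + A + B\right) - B\left(Ap_y - B\right) = A^2 + AB + B^2$, which shows your divisor is exactly one Euclidean step beyond the paper's), and then recover the exact value $p_y = \frac{A + B + Bc}{Ac - B}$ via the divisibility $A \mid B + q_1$. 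This buys strictly more information --- a complete parametrization of all possible solutions, of which your sanity-check $\left(e, k_1, k_2\right) = \left(2, 2, 3\right)$, $p_y = 4$ is the instance $c = 1$ --- at the cost of the bookkeeping with $q$, $q_1$, $c$ and the split into $c = 1$ and $c \geq 2$. One correction to your closing commentary: the ``delicate window'' $3 \cdot 2^e \leq p_y \leq 3A$ does not in fact require any exact arithmetic input; a magnitude comparison is only lossy when applied to $A^2 + AB + B^2$, whereas applied to the linear remainder $Bp_y + A + B$ (which is where the paper applies it, and which reproduces precisely your case $c = 1$) it already closes the window. So your refinement is sound and sharper than needed, but not necessary for the stated bound.
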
 
\begin{proof} Suppose $Ap_y - B \mid p_y^2 + p_y + 1.$ Then $Ap_y - B \mid Ap_y^2 + Ap_y + A - \left(Ap_y^2 - Bp_y\right) = Ap_y + Bp_y + A.$ From this, we have $Ap_y - B \mid Bp_y + A + B.$ Then $Ap_y - B \leq Bp_y + A + B,$ so $p_y \leq \frac{A + 2B}{A - B}.$ Then if $p_y \geq c,$ we have $Ac - Bc \leq A + 2B,$ so $A\left(c - 1\right) \leq B\left(c + 2\right).$ Thus $\frac{c - 1}{c + 2} \leq \frac{B}{A}.$ Plugging in $A$ and $B$ gives us:

$$\frac{c - 1}{c + 2} \leq \frac{1}{k_1} + \frac{1}{k_1k_2} + \ldots + \frac{1}{k_1k_2 \ldots k_e} \leq \frac{1}{2} + \frac{1}{4} + \ldots + \frac{1}{2^e} \leq \frac{2^e - 1}{2^e}.$$ Then $2^ec - 2^e \leq 2^ec - c + 2 \cdot 2^e - 2,$ which is false for all $c \geq 3 \cdot 2^e - 2.$ Since $p_y \geq 3 \cdot 2^e - 2,$ this is impossible as desired. \end{proof}

\section{Bounds on the radical} 

\subsection{Odd Multiperfect Numbers}

\begin{theorem}\label{oddmain} Suppose $m$ is an odd multiperfect number such that $\sigma\left(m\right) = km.$ Then, if $k$ is odd, we have that $\rad\left(m\right) \leq \sqrt{m}.$ If $k \equiv 2 \pmod 4,$ we have $\rad\left(m\right) < m^{\frac{9}{14}}.$ Otherwise, suppose $k = 2^nt$, where $t$ is odd and $n \geq 2.$ We have \[\rad\left(m\right) < m^\frac{4n + 1}{4n + 4}.\]

\end{theorem}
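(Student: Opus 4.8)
The plan is to read the structure of $m$ off the parities of the exponents in its factorization, using Lemma~\ref{3}. Write $m = \prod_i p_i^{\alpha_i}$ with the $p_i$ odd. Since $m$ is odd, $\nu_2(\sigma(m)) = \nu_2(km) = \nu_2(k)$, while $\nu_2(\sigma(m)) = \sum_i \nu_2(\sigma(p_i^{\alpha_i}))$. By Lemma~\ref{3} this $2$-adic valuation vanishes exactly when $\alpha_i$ is even and is at least $1$ when $\alpha_i$ is odd, so the number $s$ of primes dividing $m$ to an odd power satisfies $s \le \nu_2(k) = n$. When $k$ is odd ($n=0$) every exponent is even, hence $m$ is a perfect square and $\rad(m) = \prod_i p_i \le \prod_i p_i^{\alpha_i/2} = \sqrt m$, which settles the first case immediately.

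For the remaining cases I would isolate the primes dividing $m$ to the first power, say $q_1,\dots,q_s$, and set $Q = q_1\cdots q_s$. Every other prime occurs with exponent at least $2$, so bounding $p_i \le p_i^{\alpha_i/2}$ on that part gives the basic inequality $\rad(m) \le Q\cdot\sqrt{m/Q} = \sqrt{Qm}$. Everything then reduces to an upper bound on $Q$: to prove $\rad(m) < m^{\theta}$ it suffices to show $Q < m^{2\theta-1}$. Concretely, the target $\rad(m) < m^{(4n+1)/(4n+4)}$ for $n\ge 2$ becomes $Q < m^{(2n-1)/(2n+2)}$, and the target $\rad(m) < m^{9/14}$ for $n=1$ becomes $Q < m^{2/7}$.

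The crux is bounding these first-power primes. For such a $q$, write $m = q\,m'$ with $q\nmid m'$; then $\sigma(m) = (q+1)\sigma(m') = km$ forces $(q+1)\mid km'$ and, reducing modulo $q$, the key divisibility $q \mid \sigma(m')$. Since $\sigma(m') = \prod_{p_i\ne q}\sigma(p_i^{\alpha_i})$ with each local factor satisfying $\sigma(p_i^{\alpha_i}) < \tfrac32 p_i^{\alpha_i}$, this pins $q$ to the prime-power structure of the complementary part; iterating over all $s \le n$ special primes and exploiting that the abundancy, and hence the product of local factors, is tightly constrained (Lemma~\ref{4} forces $m$ to carry more than $3n$ distinct prime factors) is what I would use to push $Q$ below $m^{(2n-1)/(2n+2)}$. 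The delicate case $n=1$ reduces to Euler's form $m = p^{\alpha}d^2$ with $p\equiv\alpha\equiv 1\pmod 4$ (forced by Lemma~\ref{3}): if $\alpha\ge 3$ then no prime divides $m$ to the first power, so $Q=1$ and $\rad(m)\le\sqrt m$, while if $\alpha=1$ then $Q=p$ and I would adapt the sharpened special-prime estimate of Klurman~\cite{klurman} to obtain $p < m^{2/7}$.

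The main obstacle is precisely the sharpness of these special-prime bounds: the naive consequence of $q\mid\sigma(m')$ only yields $q < \tfrac32 m$, whereas the improvement to $m^{2/7}$ (for $n=1$) and to the product bound $m^{(2n-1)/(2n+2)}$ (for $n\ge 2$) demands the finer counting of Acquaah--Konyagin~\cite{acquaahkonyagin} and Klurman~\cite{klurman}, reinforced by the restrictions on primes of exponent $2$ coming from the divisibility analysis of Lemma~\ref{LOOPY} (recall $p_y^2+p_y+1 = \sigma(p_y^2)$). Finally, a short list of configurations with very few prime factors escapes the asymptotic estimate and must be excluded directly; these are exactly the cases dispatched in Lemmas~\ref{lem1} and~\ref{2}, which handle the small-$r$ regime for $4\mid k$.
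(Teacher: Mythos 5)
Your reductions are sound and they match the paper's skeleton: Lemma \ref{3} applied to $\nu_2(\sigma(m)) = \nu_2(k)$ does give at most $n$ odd-exponent primes (the paper uses exactly this as $g \le s \le n$), the odd-$k$ case is settled the same way, the inequality $\rad(m) \le \sqrt{Qm}$ is valid, and the arithmetic $2\theta - 1 = (2n-1)/(2n+2)$ is right. But the heart of the theorem --- actually establishing $Q < m^{(2n-1)/(2n+2)}$ --- is missing from your sketch. You defer it to ``the finer counting of Acquaah--Konyagin and Klurman,'' and that does not transfer as stated: those estimates are engineered for $\sigma(m) = 2m$, where Euler's form guarantees a \emph{single} special prime of known residue and abundancy exactly $2$. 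Here there can be up to $n$ first-power primes, and nothing in your proposal explains how they are played off against one another, which is where essentially all of the work in the paper lies. (Your treatment of $n = 1$ is the one place where deferring to \cite{klurman} is legitimate, since the paper's own displayed proof also covers only odd $k$ and $n \ge 2$, leaving $k \equiv 2 \pmod 4$ to Klurman's single-special-prime argument.)

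Concretely, the mechanism the paper uses and your proposal lacks is an iterative divisibility-and-exponent induction. Assuming $\rad(m) > m^{\beta}$ with $\beta = \frac{4n+1}{4n+4}$, one rewrites the hypothesis as
\[
p_1 \cdots p_g \; > \; \prod p^{\frac{2\alpha\beta + \beta - 1}{1 - \beta}} \cdot \prod p^{\frac{2\alpha\beta - 1}{1 - \beta}},
\]
the products running over the exponent-$\ge 3$ odd part and the even part (this is your $Q$-bound in multiplicative form). Ordering $p_1 < \cdots < p_g$, the largest first-power prime $p_g$ cannot divide any $1 + p_i$, so it divides some $\sigma(p^{2\alpha+1})$ or $\sigma(p^{2\alpha})$ on the right, whence $p_g < (1+\epsilon)\,p^{2\alpha+1}$ (resp.\ $p^{2\alpha}$); raising to the $g$-th power and invoking $g \le n$ turns the display into a pure comparison of exponents that forces $\alpha = 1$. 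One then iterates with the next prime: each option for what it can divide either yields an exponent contradiction for $n \ge 2$ (with $n = 2, 3$ checked by hand), or funnels into a chain $p \mid 1 + p_i$, $p_i \mid 1 + p_j, \ldots$, which is precisely what Lemma \ref{LOOPY} was built to terminate --- your reading of that lemma via $\sigma(p_y^2) = p_y^2 + p_y + 1$ is correct, and your instinct to use Lemmas \ref{lem1}, \ref{2} and \ref{4} on configurations with few prime factors is how the paper closes the residual $n = 2$ subcase. So: right frame, right auxiliary lemmas, but the case analysis that converts the divisibility constraint $q \mid \sigma(m/q)$ into the quantitative bound is asserted rather than performed, and no citation substitutes for it.
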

\begin{proof} We begin by considering if $k, m$ are both odd in the equation $\sigma\left(m\right) = km.$ Let $m = p_1^{\alpha_1} \ldots p_r^{\alpha_r}.$ We have:

$$km = \left(1 + p_1 + \ldots + p_1^{\alpha_1}\right)\ldots \left(1 + p_r + \ldots + p_r^{\alpha_r}\right).$$ Since $k, m$ are both odd, so is $km.$ This means that the product on the right in the equation above is also odd, so each sum of the powers of the $p_i$ must be odd for all $i$ where $1 \leq i \leq r$. This implies that all of the nonzero $\alpha_i$ are even, so they are at least $2.$ Therefore $\rad\left(m\right) \leq \sqrt{m}$ if $k, m$ are both odd. Now we consider what happens when $k$ is even.

If $m$ is squarefull, then its radical is clearly no larger than $\sqrt{m}.$ Therefore, we may assume that some of the prime factors of $m$ are raised to only the first power. Let $$m = p_1p_2 \ldots p_gp_{g + 1}^{2\alpha_{g+1} + 1} \ldots p_s^{2\alpha_s + 1}p_{s + 1}^{2\alpha_{s + 1}} \ldots p_r^{2\alpha_r}$$
and $$\beta = \frac{4n + 1}{4n + 4}.$$ Suppose by contradiction that $\rad\left(m\right) > m^{\beta}.$ This would give us:
\[
p_1p_2 \ldots p_r > p_1^{\beta}p_2^{\beta} \ldots p_g^{\beta}p_{g + 1}^{2\alpha_{g + 1}\beta + \beta} \ldots p_s^{2\alpha_s\beta + \beta}p_{s + 1}^{2\alpha_{s + 1}\beta} \ldots p_r^{2\alpha_r\beta}.
\] From this, we get:

\[
p_1p_2 \ldots p_g > p_{g + 1}^{\frac{2\alpha_{g + 1}\beta + \beta - 1}{1 - \beta}} \ldots p_s^{\frac{2\alpha_s\beta + \beta - 1}{1 - \beta}}p_{s + 1}^{\frac{2\alpha_{s + 1}\beta - 1}{1 - \beta}} \ldots p_r^{\frac{2\alpha_r\beta - 1}{1 - \beta}}. 
\] Since $\sigma\left(m\right) = km = 2^ntm,$ we have:

\[
2^ntp_1p_2 \ldots p_gp_{g + 1}^{2\alpha_{g+1} + 1} \ldots p_s^{2\alpha_s + 1}p_{s + 1}^{2\alpha_{s + 1}} \ldots p_r^{2\alpha_r} 
\]
$$
= \left(1 + p_1\right) \ldots \left(1 + p_g\right)\left(1 + p_{g + 1} + \ldots + p_{g + 1}^{2\alpha_{g + 1}+1}\right) \ldots \left(1 + p_s + \ldots + p_s^{2\alpha_s + 1}\right)$$$$\left(1 + p_{s + 1} + \ldots + p_{s+1}^{2\alpha_{s + 1}}\right) \ldots \left(1 + p_r + \ldots + p_r^{2\alpha_r}\right).
$$ Without loss of generality, let $p_1 < p_2 < \ldots <p_g.$ This means $p_g$ cannot divide $1 + p_1, 1 + p_2, \ldots, 1 + p_g.$ We have 2 cases: either $p_g|1 + p_{g + 1} + \ldots + p_{g + 1}^{2\alpha_{g + 1} + 1}$ or $p_g|1 + p_{s + 1} + \ldots + p_{s+1}^{2\alpha_{s + 1}}$ where $p_{g + 1}$ and $p_{s + 1}$ are arbitrary primes with odd and even exponents, respectively.
$\\$
$\\$
\textit{Case 1}: $p_g|1 + p_{g + 1} + \ldots + p_{g + 1}^{2\alpha_{g + 1} + 1}.$
$\\$ We have $p_g < p_{g + 1}^{2\alpha_{g + 1} + 1}\left(1 + \epsilon\right).$ Combining this with our above inequality, we have: $$p_{g + 1}^{\left(2\alpha_{g + 1} + 1\right)g} > p_g^g >  p_1p_2 \ldots p_g $$$$ > p_{g + 1}^{\frac{2\alpha_{g + 1}\beta + \beta - 1}{1 - \beta}} \ldots p_s^{\frac{2\alpha_s\beta + \beta - 1}{1 - \beta}}p_{s + 1}^{\frac{2\alpha_{s + 1}\beta - 1}{1 - \beta}} \ldots p_r^{\frac{2\alpha_r\beta - 1}{1 - \beta}} > p_{g + 1}^{\frac{2\alpha_{g + 1}\beta + \beta - 1}{1 - \beta}}.$$ From $n \geq s \geq g,$ we get that $2n\alpha_{g + 1} + n > \frac{2\alpha_{g + 1}\beta + \beta - 1}{1 - \beta}.$ From $\beta = \frac{4n + 1}{4n + 4}$ we get:

$$2n\alpha_{g + 1} + n > \frac{\left(8n + 2\right)\alpha_{g + 1} - 3}{3} \implies \alpha_{g + 1} < 1.5 \implies \alpha_{g + 1} = 1.$$ Then, $$p_{g + 1}^{\frac{n + 1}{3}} > p_{g + 2}^{\frac{\left(8n + 2\right)\alpha_{g + 2} - 3}{3}} \ldots p_s^{\frac{\left(8n + 2\right)\alpha_{s} - 3}{3}}p_{s + 1}^{\frac{\left(8n + 2\right)\alpha_{s + 1} - \left(4n + 4\right)}{3}} \ldots p_r^{\frac{\left(8n + 2\right)\alpha_{r} - \left(4n + 4\right)}{3}}, $$or $$p_{g + 1} > p_{g + 2}^{\frac{\left(8n + 2\right)\alpha_{g + 2} - 3}{n + 1}} \ldots p_s^{\frac{\left(8n + 2\right)\alpha_{s} - 3}{n + 1}}p_{s + 1}^{\frac{\left(8n + 2\right)\alpha_{s + 1} - \left(4n + 4\right)}{n + 1}} \ldots p_r^{\frac{\left(8n + 2\right)\alpha_{r} - \left(4n + 4\right)}{n + 1}}.$$ Now consider what $p_{g + 1}$ divides. Suppose $p_{g + 1} \mid 1 + p_{g + 2} + \ldots + p_{g + 2}^{2\alpha_{g + 2} + 1}.$ Then $2\alpha_{g + 2} + 1 > \frac{\left(8n + 2\right)\alpha_{g + 2} - 3}{n + 1},$ or $6n\alpha_{g + 2} < n + 4.$ Contradiction. Suppose $p_{g + 1} \mid 1 + p_{s + 1} + \ldots + p_{s + 1}^{2\alpha_{s + 1}}.$ Then $2\alpha_{s + 1} \geq \frac{\left(8n + 2\right)\alpha_{s + 1} - \left(4n + 4\right)}{n + 1}.$ This means $6n\alpha_{s + 1} \leq 4n + 4,$ which we know is false for $n \geq 3.$ If $n = 2,$ that forces $\alpha_{s + 1} = 1$ and means $p_{s+ 1}$ is the only prime on the right. Then $m$ has exactly $g + 2$ prime factors, but by Lemma \ref{LOOPY}:

$$g \leq n < \frac{1}{3}\left(g + 2\right)$$ which is false for all $g \geq 1.$ Therefore we have a contradiction for all $n \geq 2.$The only other possibility is that $p_{g + 1} \mid 1 + p_i$ where $1 \leq i \leq g.$ Then $p_i \geq 2p_{g + 1} - 1.$ Since $p_{g + 1}$ was too large to divide the series on the right and $p_i > p_{g + 1},$ the same is true for $p_i$ so we must have $p_i \mid 1 + p_j$ and so on. Note $$p_{g + 1} > p_{g + 2}^{\frac{\left(8n + 2\right)\alpha_{g + 2} - 3}{n + 1}} \ldots p_s^{\frac{\left(8n + 2\right)\alpha_{s} - 3}{n + 1}}p_{s + 1}^{\frac{\left(8n + 2\right)\alpha_{s + 1} - \left(4n + 4\right)}{n + 1}} \ldots p_r^{\frac{\left(8n + 2\right)\alpha_{r} - \left(4n + 4\right)}{n + 1}}$$$$ > 3^{r - \left(g + 1\right)} > 2 \cdot 3^{2n} > 3 \cdot 2^n.$$ Writing $p_j = k_1p_i - 1$ and so on, we use Lemma \ref{LOOPY} to eliminate this case.
$\\$ $\\$
\textit{Case 2}: $p_g|1 + p_{s + 1} + \ldots + p_{s+1}^{2\alpha_{s + 1}}$ $\\$
From $p_g|1 + p_{s + 1} + \ldots + p_{s+1}^{2\alpha_{s + 1}},$ we have that $$p_g < p_{s + 1}^{2\alpha_{s + 1}} + 2p_{s + 1}^{2\alpha_{s + 1} - 1} \leq  p_{s + 1}^{2\alpha_{s + 1}} \cdot \left(1 + \epsilon\right).$$ Combining this with our inequality above, we have:

$$p_{s + 1}^{2\alpha_{s + 1}g} > p_g^g > p_1p_2 \ldots p_g $$ $$> p_{g + 1}^{\frac{2\alpha_{g + 1}\beta + \beta - 1}{1 - \beta}} \ldots p_s^{\frac{2\alpha_s\beta + \beta - 1}{1 - \beta}}p_{s + 1}^{\frac{2\alpha_{s + 1}\beta - 1}{1 - \beta}} \ldots p_r^{\frac{2\alpha_r\beta - 1}{1 - \beta}} > p_{s + 1}^{\frac{2\alpha_{s + 1}\beta - 1}{1 - \beta}}.$$ From this we get that $2\alpha_{s + 1}g > \frac{2\alpha_{s+1}\beta - 1}{1 - \beta}.$ From $n \geq g$ and $\beta = \frac{4n + 1}{4n + 4}$ we have:

$$2n\alpha_{s + 1} \geq \frac{2\alpha_{s + 1}\beta - 1}{1 - \beta} = \frac{\left(8n + 2\right)\alpha_{s + 1}  - \left(4n + 4\right)}{3}.$$ This implies that $\alpha_{s + 1} \leq 2,$ but if $\alpha_{s + 1} = 2,$ then $p_{s + 1}$ is the only prime on the right in the above inequality, violating Lemma \ref{LOOPY}. Therefore, assume $\alpha_{s + 1} = 1.$ Plugging this into the inequality above, we get:

$$2p_{s + 1}^{2n} \geq 2p_{s + 1}^{2g} > p_g^g > p_1p_2 \ldots p_g $$ $$> p_{g + 1}^{\frac{2\alpha_{g + 1}\beta + \beta - 1}{1 - \beta}} \ldots p_s^{\frac{2\alpha_s\beta + \beta - 1}{1 - \beta}}p_{s + 1}^{\frac{2\beta - 1}{1 - \beta}} \ldots p_r^{\frac{2\alpha_r\beta - 1}{1 - \beta}} > p_{s + 1}^{\frac{2\alpha_{s + 1}\beta - 1}{1 - \beta}}.$$ From $\beta = \frac{4n + 1}{4n + 4},$ we have that $\frac{2\beta - 1}{1 - \beta} = \frac{4n - 2}{3}.$ We have:

$$p_{s + 1}^{\frac{2n + 2}{3}} \geq p_{g + 1}^{\frac{\left(8n + 2\right)\alpha_{g + 1} - 3}{3}} \ldots p_s^{\frac{\left(8n + 2\right)\alpha_s - 3}{3}}p_{s + 2}^{\frac{\left(8n + 2\right)\alpha_{s + 2} - \left(4n + 4\right)}{3}} \ldots p_r^{\frac{\left(8n + 2\right)\alpha_r - \left(4n + 4\right)}{3}}.$$ Raising both sides to the $\frac{3}{2n + 2}$ power gives us:

$$p_{s + 1} \geq p_{g + 1}^{\frac{\left(8n + 2\right)\alpha_{g + 1} - 3}{2n + 2}} \ldots p_s^{\frac{\left(8n + 2\right)\alpha_s - 3}{2n + 2}}p_{s + 2}^{\frac{\left(8n + 2\right)\alpha_{s + 2} - \left(4n + 4\right)}{2n + 2}} \ldots p_r^{\frac{\left(8n + 2\right)\alpha_r - \left(4n + 4\right)}{2n + 2}}.$$ Now we consider what $p_{s + 1}$ divides. Suppose $p_{s + 1}^2 \mid 1 + p_{g + 1} + \ldots + p_{g + 1}^{2\alpha_{g + 1} + 1}.$ Then $$p_{g + 1}^{\alpha_{g + 1} + 1}\left(\sqrt{2}\right) > p_{s + 1} $$ $$\geq p_{g + 1}^{\frac{\left(8n + 2\right)\alpha_{g + 1} - 3}{2n + 2}} \ldots p_s^{\frac{\left(8n + 2\right)\alpha_s - 3}{2n + 2}}p_{s + 2}^{\frac{\left(8n + 2\right)\alpha_{s + 2} - \left(4n + 4\right)}{2n + 2}} \ldots p_r^{\frac{\left(8n + 2\right)\alpha_r - \left(4n + 4\right)}{2n + 2}} $$ $$> p_{g + 1}^{\frac{\left(8n + 2\right)\alpha_{g + 1} - 3}{2n + 2}}.$$ Then $\alpha_{g + 1} + 1 \geq \frac{\left(8n + 2\right)\alpha_{g + 1} - 3}{2n + 2}.$ This is impossible for $n \geq 2$, contradiction. Now suppose $p_{s + 1} \mid 1 + p_{g + 1} + \ldots + p_{g + 1}^{2\alpha_{g + 1} + 1}$ and $p_{s + 1} \mid 1 + p_{g + 2} + \ldots + p_{g + 2}^{2\alpha_{g + 2} + 1}.$ Then $$2p_{g + 1}^{\alpha_{g + 1} + 1}p_{g + 2}^{\alpha_{g + 2} + 1} > p_{s + 1} \geq p_{ g + 1}^{\frac{\left(8n + 2\right)\alpha_{g + 1} - 3}{2n + 2}} p_{g + 2}^{\frac{\left(8n + 2\right)\alpha_{g + 2} - 3}{2n + 2}}.$$ Contradiction. Now suppose $p_{s + 1}^2 \mid 1 + p_{s + 2} + \ldots + p_{s + 2}^{2\alpha_{s + 2}}.$ Then: $$p_{s + 2}^{\alpha_{s + 2}}\left(\sqrt{2}\right) > p_{s + 1} $$ $$\geq p_{g + 1}^{\frac{\left(8n + 2\right)\alpha_{g + 1} - 3}{2n + 2}} \ldots p_s^{\frac{\left(8n + 2\right)\alpha_s - 3}{2n + 2}}p_{s + 1}^{\frac{\left(8n + 2\right)\alpha_{s + 1} - \left(4n + 4\right)}{2n + 2}} \ldots p_r^{\frac{\left(8n + 2\right)\alpha_r - \left(4n + 4\right)}{2n + 2}}$$ $$ > p_{s + 2}^{\frac{\left(8n + 2\right)\alpha_{s + 2} - \left(4n + 4\right)}{2n + 2}}.$$ Contradiction. Now suppose $p_{s + 1} \mid 1 + p_{s + 2} + \ldots + p_{s + 2}^{2\alpha_{s + 2}}$ and $p_{s + 1} \mid 1 + p_{s + 3} + \ldots + p_{s + 3}^{2\alpha_{s+ 3}}.$ Then: $$p_{s + 2}^{\alpha_{s + 2}}p_{s + 3}^{\alpha_{s + 3}} > p_{s + 1} \geq p_{s + 2}^{\frac{\left(8n + 2\right)\alpha_{s + 2} - 3}{2n + 2}} p_{s + 3}^{\frac{\left(8n + 2\right)\alpha_{s + 3} - 3}{2n + 2}}.$$ Contradiction. Now suppose $p_{s + 1} \mid 1 + p_{g + 1} + \ldots + p_{g + 1}^{2\alpha_{g + 1} + 1}$ and $1 + p_{s + 2} + \ldots + p_{s + 2}^{2\alpha_{s + 2}}.$ Then: $$p_{g + 1}^{\alpha_{g + 1} + 1}p_{s + 2}^{\alpha_{s + 2}} > p_{s + 1} > p_{g + 1}^{\frac{\left(8n + 2\right)\alpha_{g + 1} - 3}{2n + 2}} p_{s + 2}^{\frac{\left(8n + 2\right)\alpha_{s + 2} - \left(4n + 4\right)}{2n + 2}}.$$ Contradiction. Now suppose that $p_{s + 1} \mid 1 + p_i$ and $p_{s + 1} \mid 1 + p_{g+1} + \ldots + p_{g + 1}^{2\alpha_{g + 1} + 1}.$ Then: $$2p_{g + 1}^{2\alpha_{g + 1} + 1} > p_{s + 1} \geq p_{g + 1}^{\frac{\left(8n + 2\right)\alpha_{g + 1} - 3}{2n + 2}}.$$ From this we have that $2\alpha_{g + 1} + 1 \geq \frac{\left(8n + 2\right)\alpha_{g + 1} - 3}{2n + 2},$ or $\alpha_{g + 1} \leq \frac{2n + 5}{4n - 2}.$ For $n \geq 4$ this is clearly impossible. Thus we only need to consider if $n = 2$ or $n = 3.$
$\\$ $\\$
Case 2.1: $n = 2$ $\\$ 
The inequality above implies $\alpha_{g + 1} = 1.$ We recall that $1 \leq g \leq s \leq n = 2.$ If $g = s = 2,$ then $p_3 \mid 1 + p_3 + \ldots + p_3^{2\alpha_3 + 1},$ which is absurd. Thus we would have to have $g = 1$ and $s = 2.$

However, this means that $p_3 \mid 1 + p_1,$ $p_3 \mid 1 + p_2 + p_2^2 + p_2^3$ and $p_1 \mid 1 + p_3 + p_3^2.$ This is impossible by Lemma \ref{LOOPY}.
$\\$ $\\$
Case 2.2: $n = 3$ $\\$
As before, we can't have $g = s.$ Thus we have $\left(g, s\right) = \left(1, 2\right), \left(1, 3\right)$ or $\left(2, 3\right).$ Since $p_{g + 1}$ is odd, we note that we have $4 \mid 1 + p_{g + 1} + p_{g + 1}^2 + p_{g + 1}^3.$ This means $s < n,$ so the only case to consider is $g = 1$ and $s = 2,$ which is eliminated analogously to case 2.1. Now suppose that $p_{s + 1} \mid 1 + p_i$ and $p_{s + 1} \mid 1 + p_{s + 2} + \ldots + p_{s + 2}^{2\alpha_{s + 2}}.$ From $p_{s + 1} \mid 1 + p_{s + 2} + \ldots + p_{s + 2}^{2\alpha_{s + 2}},$ we get that:

$$2p_{s + 2}^{2\alpha_{s + 2}} > p_{s + 1} > p_{s + 2}^{\frac{\left(8n + 2\right)\alpha_{s + 2} - \left(4n + 4\right)}{2n + 2}}.$$ This forces $\alpha_{s + 2} = 1.$ Now suppose $p_{s + 2} \mid 1 + p_{s + 3} + \ldots + p_{s + 3}^{2\alpha_{s + 3}}.$ Then, using the implied inequalities, we get that $\alpha_{s + 3} \leq \frac{2n + 2}{4n - 5.}$ This is impossible for all $n \geq 4,$ and $n = 2$ and $n = 3$ are eliminated as above. Therefore, we must have that $p_{s + 3} \mid 1 + p_d$ and $1 + p_e$ where $1 \leq d, e \leq g.$ Then we have $p_d \mid 1 + p_f$ and so on, which is eliminated by Lemma \ref{LOOPY}. \end{proof}

\subsection{Even Multiperfect Numbers}

\begin{theorem}\label{evenmain}

Suppose $m$ is an even multiperfect number such that $\sigma\left(m\right) = km$. Let $k = 2^nt$, where $t$ is odd and $n\geq0$. Suppose $m=2^{\alpha}h$, where $h$ is odd and $\alpha \geq1.$  We have \[\rad\left(m\right) < m ^{\frac{2n+2\alpha+1}{2n+2\alpha+2}}.\]
\end{theorem}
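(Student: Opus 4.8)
The plan is to mirror the proof of Theorem~\ref{oddmain}, treating the odd part $h$ of $m$ as the analogue of the odd multiperfect number and folding the prime $2$ in as one extra factor. Write
\[
m = 2^{\alpha}\,p_1\cdots p_g\,p_{g+1}^{2\alpha_{g+1}+1}\cdots p_s^{2\alpha_s+1}\,p_{s+1}^{2\alpha_{s+1}}\cdots p_r^{2\alpha_r},
\]
with $p_1<\cdots<p_g$ the odd primes dividing $m$ to the first power, $p_{g+1},\dots,p_s$ those to odd exponent at least $3$, and the rest to even exponent. Set $N=n+\alpha$ and $\beta=\frac{2N+1}{2N+2}=\frac{2n+2\alpha+1}{2n+2\alpha+2}$. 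Since $\rad(m)=2\,\rad(h)$, if $h$ is squarefull then $\rad(m)\le 2\sqrt{h}<m^{\beta}$ (as $\beta>\tfrac12$), and we are done; so we may assume $g\ge 1$ and suppose for contradiction that $\rad(m)>m^{\beta}$.

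First I would record two structural facts. Since $\sigma(m)=\sigma(2^{\alpha})\sigma(h)=2^{n+\alpha}th$ and $\sigma(2^{\alpha})=2^{\alpha+1}-1$ is odd, we have $\nu_2(\sigma(h))=n+\alpha=N$; by Lemma~\ref{3} each of the $s$ odd-exponent odd primes contributes at least $1$ to this valuation, so $g\le s\le N$. Second, expanding $\rad(m)=2p_1\cdots p_r>m^{\beta}$ and isolating the first-power primes gives the core inequality
\[
p_1\cdots p_g > 2^{\frac{\alpha\beta-1}{1-\beta}}\; p_{g+1}^{\frac{(2\alpha_{g+1}+1)\beta-1}{1-\beta}}\cdots p_r^{\frac{2\alpha_r\beta-1}{1-\beta}},
\]
where $\frac{\beta}{1-\beta}=2N+1$ and $\frac{1}{1-\beta}=2N+2$, so every squarefull prime carries an exponent at least $2N$, and the power of $2$ has exponent $c:=\frac{\alpha\beta-1}{1-\beta}=2N(\alpha-1)+(\alpha-2)$.

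Next I would examine which factor of $\sigma(m)$ is divisible by the largest first-power prime $p_g$. Because $p_g$ is the largest such prime, $p_g\nmid 1+p_i$ for $i\le g$, so $p_g$ divides either $\sigma(2^{\alpha})=2^{\alpha+1}-1$, or $\sigma(p_i^{2\alpha_i+1})$ for some odd-exponent prime, or $\sigma(p_i^{2\alpha_i})$ for some even-exponent prime. The last two possibilities are handled exactly as Cases~1 and~2 of Theorem~\ref{oddmain}: feeding $p_g<p_{g+1}^{2\alpha_{g+1}+1}(1+\epsilon)$, respectively $p_g<p_{s+1}^{2\alpha_{s+1}}(1+\epsilon)$, into the core inequality and using $g\le N$ forces $\alpha_{g+1}\le\tfrac12$ in the odd-exponent case (an immediate contradiction) and $\alpha_{s+1}=1$ in the even-exponent case, after which the chain-of-divisibilities argument and Lemma~\ref{LOOPY} close things out as before, the only change being that a chain may now also terminate at the factor $2^{\alpha+1}-1$ (which is then dealt with as below).

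The genuinely new case, and the main obstacle, is $p_g\mid 2^{\alpha+1}-1$, which has no analogue in the odd setting. Here $p_g<2^{\alpha+1}$, so $p_1\cdots p_g\le p_g^{\,g}<2^{(\alpha+1)g}\le 2^{(\alpha+1)N}$; comparing this with the core inequality gives $(\alpha+1)N>c$, i.e.\ $N(3-\alpha)+(2-\alpha)>0$. For $\alpha\ge 3$ this is false, a contradiction. For $\alpha\in\{1,2\}$ the same comparison forces the squarefull part to be empty, since a single squarefull prime already contributes $\ge 3^{2N}>2^{(\alpha+1)N-c}$; thus $h$ is squarefree with every prime dividing $2^{\alpha+1}-1$, which pins $m$ down to a short list of small numbers dispatched by direct computation. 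I expect this prime-$2$ bookkeeping—ruling out the conspiracy of the factor $2^{\alpha+1}-1$ with a small squarefull part—to be the delicate step, while the rest of the argument is a faithful transcription of Theorem~\ref{oddmain} with $n$ replaced by $N=n+\alpha$.
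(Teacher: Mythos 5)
Your proposal is correct in outline but takes a genuinely different — and heavier — route than the paper's own proof of Theorem~\ref{evenmain}. The paper's argument is much lighter than that of Theorem~\ref{oddmain}: after the same core inequality and the same valuation bound $i\leq\nu_2\left(\sigma\left(m\right)\right)=n+\alpha$ (your $g\leq N$), it bounds the whole product of first-power primes at once via $p_i\leq\sigma\left(p_c^{2\alpha_c}\right)<\frac{3}{2}p_c^{2\alpha_c}$, giving $p_1\cdots p_i<\left(\frac{3}{2}\right)^{n+\alpha}p_c^{2\alpha_c\left(n+\alpha\right)}$, and finishes by a direct exponent comparison — no divisibility chains and no appeal to Lemma~\ref{LOOPY} at all. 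The slack that makes this possible is exactly what you noticed: with $\beta=\frac{2N+1}{2N+2}$ the right-hand exponents are roughly $4N\alpha_x$ against roughly $2N\alpha_x$ on the left, whereas in the odd case the ratio was only $\frac{4}{3}$, which is why Theorem~\ref{oddmain} needed the chain analysis. So your transplant of the full chain/LOOPY machinery is more work than the generic subcases require, though not wrong; indeed you are more careful than the paper at the one tight spot, the even-exponent prime with $\alpha_{s+1}=1$, where both exponents equal $2N$ and the paper's treatment becomes muddled (it briefly retreats to $\beta=\frac{2n+2\alpha+1}{2n+2\alpha+3}$ without comment). More significantly, you identify and handle a case the paper silently omits: $p_g\mid\sigma\left(2^{\alpha}\right)=2^{\alpha+1}-1$, which is genuinely possible (e.g.\ $7=\sigma\left(4\right)$) and has no analogue in the odd setting; this is a real gap in the paper that your proposal fills. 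Your bookkeeping there checks out: $\left(\alpha+1\right)N>2N\left(\alpha-1\right)+\alpha-2$ fails for $\alpha\geq3$, and for $\alpha\leq2$ a squarefull prime would contribute $3^{2N}>2^{\left(\alpha+1\right)N-c}$, forcing $h$ squarefree. One small slip: squarefreeness forces only $p_g$, not every prime of $h$, to divide $2^{\alpha+1}-1$ (smaller first-power primes may divide some $1+p_j$); but since $2^{\alpha+1}-1\in\{3,7\}$ this still gives $p_g\leq7$, hence $m\mid2^2\cdot3\cdot5\cdot7$, and your short computable list survives. Note that this list contains $m=6$, for which $\rad\left(6\right)=6>6^{5/6}$, so the theorem as literally stated fails there: both your argument and the paper's (which slips in the phrase ``for large $m$'') need an implicit largeness assumption, and your case analysis has the virtue of making visible exactly where the exception lives.
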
 \begin{proof} 
Let $m=2^{\alpha}p_1p_2 \ldots p_i p_{i+1}^{2\alpha_{i+1}+1}\ldots p_j^{2\alpha_j+1}p_{j+1}^{2\alpha_{j+1}}\ldots p_k^{2\alpha_k}$. Assume that $\rad\left(m\right) > m^{\beta}$, where $\beta = \frac{2n+2\alpha+1}{2n+2\alpha+2}$. Without loss of generality, suppose that $p_1 <p_2<\ldots<p_i$. Suppose that for large $m$, we have $$\rad\left(m\right)=2p_1\ldots p_i\ldots p_j\ldots p_k > 2^{\beta}p_1^{2\alpha_1\beta}\ldots p_{i+1}^{2\alpha_{i+1}\beta+\beta}\ldots p_{j+1}^{2\alpha_{j+1}\beta}\ldots p_k^{2\alpha_k\beta}$$ Since $m$ is multiperfect, we have \[2^{n+\alpha} t p_1\dots p_i{p_{i+1}}^{2\alpha_{i+1}+1}\dots{p_j}^{2\alpha_j+1}\dots {p_k}^{2\alpha_k} \] \[ = \left(1+2+\dots+2^{\alpha}\right)\prod_{x=1}^{i}\left(1+p_x\right)\prod_{y=i+1}^{j}\left(1+p_y+\dots+{p_y}^{2\alpha_y+1}\right)\prod_{z=j+1}^{k}\left(1+p_z+\dots+{p_z}^{2\alpha_z}\right)\]
$\\$
\textit{Case 1}: If $\alpha = 1$, we have    
\[ 2p_1p_2\dots p_i \geq \prod_{x=i+1}^{j} p_x^{\frac{2\alpha_x\beta+\beta-1}{1-\beta}}\prod_{x=j+1}^{k} p_x^{\frac{2\alpha_x\beta-1}{1-\beta}} \] If $i = 0$, since $2^{n+1}\mid \sigma\left(m\right)$, $j\neq0$. We have $\rad\left(m\right) \gg \sqrt{m}$ and the result follows. So we can assume that $i \geq 1$. Since we assume that $\rad\left(m\right) \gg m^{\beta}$,

\[2p_1\dots p_i\geq \prod_{x=i+1}^{j}{p_x}^{\frac{2\alpha_x\beta+\beta-1}{1-\beta}}\prod_{y=j+1}^{k}{p_y}^{\frac{2\alpha_y\beta-1}{1-\beta}}\]Since 

\[p_i\mid \sigma\left(m\right),\]\[ p_i\nmid \prod_{x=1}^{i}\left(1+p_x\right), \]
so $p_i$ must divide one of\[ 1+p_c+\cdots+p_c^{2\alpha_c}\left(c\in[i+1,j]\right) \]or\[ 1+p_d+\cdots+p_d^{2\alpha_d+1} \left(d\in[j+1,k]\right)\]If $p_i\mid 1+p_c+\cdots+p_c^{2\alpha_c}$, combining that 
 \[\frac{\sigma\left(p^{\alpha}\right)}{p^{\alpha}}=1+\frac{1}{p}+\dots+\frac{1}{p^{\alpha}}<\frac{1-{\left(\frac{1}{p}\right)}^{n}}{1-\frac{1}{p}}=\frac{p}{p-1}<\frac{3}{2}\] then $p_i <\sigma\left({p_c}^{2\alpha_c}\right)< \frac{3}{2}{p_c}^{2\alpha_c}$. From Lemmas \ref{2} and \ref{3}, we have that $m$ must have at least 3 distinct prime divisors. From Lemma \ref{4}, we have $i\leq v_2\left(m\right)=n+\alpha$. Thus, we have \[{\left(\frac{3}{2}\right)}^{n+1}2{p_c}^{2\alpha_c\left(n+1\right)}\geq2\left(\frac{3}{2}\right)^{i}{p_c}^{2\alpha_c i}>2p_1\dots p_i \] \[ \geq  \prod_{x=i+1}^{j}{p_x}^{\frac{2\alpha_x\beta+\beta-1}{1-\beta}}\prod_{y=j+1}^{k}{p_y}^{\frac{2\alpha_y\beta-1}{1-\beta}} \] We proceed by comparing exponents of primes. For $\alpha_c=1, 2\left(n+1\right)=\frac{2\beta+\beta-1}{1-\beta}$, when $\beta=\frac{2n+2\alpha+1}{2n+2\alpha+3}$.$\\$ For $\alpha_c \geq 2$, $$2\alpha_c\left(n+1\right)>\frac{2\alpha_c\beta+\beta-1}{1-\beta}$$because $\frac{d}{d\alpha_c}\left(2\alpha_c\left(n+1\right)\right)=2n+2>\frac{d}{d\alpha_c}\left(\frac{2\alpha_c\beta+\beta-1}{1-\beta}\right)=n+\frac{3}{2}$. We have ${p_c}^{2\alpha\left(n+1\right)} \leq {p_c}^{\frac{2\alpha_c\beta+\beta-1}{1-\beta}}$ and $\frac{3}{2}<p_e$, for $1<e\leq k$. Thus, 
\[2\left(\frac{3}{2}\right)^{i}{p_c}^{2\alpha_c\left(n+1\right)}<\prod_{x=i+1}^{j}{p_x}^{\frac{2\alpha_x\beta+\beta-1}{1-\beta}}\prod_{y=j+1}^{k}{p_y}^{\frac{2\alpha_y\beta-1}{1-\beta}} \]
which is a contradiction. Else if $p_i \mid 1+p_d+\cdots+p_d^{2\alpha_d+1}$, then $p_i<\frac{3}{2}{p_d}^{2\alpha_d+1}$. By analogy with the previous proof ,\[2\left(\frac{3}{2}\right)^{n+1}{p_d}^{\left(2\alpha_d+1\right)\left(n+1\right)}\geq2\left(\frac{3}{2}\right)^{i}{p_d}^{\left(2\alpha_d+1\right) i}\] \[>2p_1\dots p_i\geq \prod_{x=i+1}^{j}{p_x}^{\frac{2\alpha_x\beta+\beta-1}{1-\beta}}\prod_{y=j+1}^{k}{p_y}^{\frac{2\alpha_y\beta-1}{1-\beta}} \] and $\left(2\alpha_d+1\right)\left(n+1\right)\leq \frac{\left(2\alpha_d+1\right)\beta-1}{1-\beta}$, for $\beta=\frac{2\alpha+2n+1}{2\alpha+2n+2}$, which is a contradiction.
$\\$ $\\$
\textit{Case 2}:
If $\alpha \geq 2$, we have
\[p_1p_2\dots p_i \geq 2^{\frac{\alpha\beta-1}{1-\beta}}\prod_{x=i+1}^{j} p_x^{\frac{2\alpha_x\beta+\beta-1}{1-\beta}}\prod_{x=j+1}^{k} p_x^{\frac{2\alpha_x\beta-1}{1-\beta}} \]Similar to Case 1, we can assume that $i\geq1$. If $p_i\mid 1+p_c+\cdots+p_c^{2\alpha_c}$, where $\left(c\in[i+1,j]\right)$, we have
 \[\left(\frac{3}{2}\right)^{n+\alpha}{p_c}^{2\alpha_c\left(n+\alpha\right)}\geq\left(\frac{3}{2}\right)^{i}{p_c}^{2\alpha_c i}>p_1\dots p_i \] \[\geq 2^{\frac{\alpha\beta-1}{1-\beta}} \prod_{x=i+1}^{j}{p_x}^{\frac{2\alpha_x\beta+\beta-1}{1-\beta}}\prod_{y=j+1}^{k}{p_y}^{\frac{2\alpha_y\beta-1}{1-\beta}} \]Also,  $${p_c}^{2\alpha_c\left(n+\alpha\right)} \leq {p_c}^{\frac{2\alpha_c\beta+\beta-1}{1-\beta}}$$ for $\beta=\frac{2\alpha+2n+1}{2\alpha+2n+3}$, which is impossible. Else if $p_i \mid 1+p_d+\cdots+p_d^{2\alpha_d+1}$, we have
 \[\left(\frac{3}{2}\right)^{n+\alpha}{p_d}^{\left(2\alpha_d+1\right)\left(n+\alpha\right)}\geq\left(\frac{3}{2}\right)^{i}{p_d}^{\left(2\alpha_d+1\right) i}>2p_1\dots p_i\] \[\geq \prod_{x=i+1}^{j}{p_x}^{\frac{2\alpha_x\beta+\beta-1}{1-\beta}}\prod_{y=j+1}^{k}{p_y}^{\frac{2\alpha_y\beta-1}{1-\beta}} \]Also,  ${p_d}^{\left(2\alpha_d+1\right)\left(n+\alpha\right)} \leq {p_d}^{\frac{2\alpha_d\beta+\beta-1}{1-\beta}}$ for $\beta=\frac{2\alpha+2n+1}{2\alpha+2n+2}$, which is again impossible. \end{proof}

\section{ABC Conjecture and Multiperfect Numbers} 

We proceed by applying our bounds to study the gaps between multiperfect and perfect numbers. Recall the ABC Conjecture, which states that for a fixed $\epsilon > 0$, there exists a constant $C_\epsilon$ (dependent on $\epsilon$) such that for all coprime $a , b \in \mathbb{N}_{>0}$ and $a + b = c$, the following inequality is true:
\[
\max\left(|a|, |b|, |c|\right) \leq C_{\epsilon}\rad\left(abc\right)^{1 + \epsilon}
\]
\subsection{Multiperfect numbers and polynomials}
\begin{theorem}\label{thm8}
Assume that the ABC conjecture is true. Suppose that $f\left(x, y\right) \in \mathbb{Z}[x, y]$ is homogeneous, without any repeated linear factors. Fix $\epsilon > 0$. Then, for any coprime integers $m$, $n$, $$\rad\left(f\left(m, n\right)\right) \gg \max\left(|m|, |n|\right)^{\deg{f} - 2 - \epsilon}$$.
\end{theorem}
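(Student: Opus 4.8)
The plan is to reduce the statement to a bound on a product of linear forms and then feed a three-term relation into the ABC conjecture. Write $d=\deg f$ and $H=\max(|m|,|n|)$. The bound is vacuous when $d\le 2$, since the exponent is then $\le-\epsilon$ while $\rad(f(m,n))\ge 1$ whenever $f(m,n)\ne 0$, so I would assume $d\ge 3$. Because $f$ has no repeated linear factor, over its splitting field $K$ it factors as $f(x,y)=c\prod_{i=1}^{d}(b_i x-a_i y)$ with the $d$ roots $[a_i:b_i]\in\mathbb{P}^1(\overline{\mathbb{Q}})$ pairwise distinct. Since $\gcd(m,n)=1$, each factor $b_i m-a_i n$ is $O(H)$, the largest of them is $\gg H$, and any prime dividing two factors divides the nonzero resultant of the corresponding forms and so lies in a finite set depending only on $f$. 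Hence, up to bounded factors, the values $b_i m-a_i n$ behave like pairwise coprime integers whose product is $f(m,n)\asymp H^{d}$, and $\rad(f(m,n))$ equals $\rad\big(\prod_i(b_i m-a_i n)\big)$ up to an $O_f(1)$ factor. I would run the argument with the ABC conjecture over the number field $K$, phrased through heights, and descend to $\mathbb{Q}$ at the end.

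To locate the exponent $d-2$, it helps to look first at the model form $f=x^d-y^d$. Here $m^d+(n^d-m^d)=n^d$ with $\gcd(m^d,n^d)=1$, so ABC gives $H^d\asymp\max(|m^d|,|n^d|)\ll\rad\big(m^d n^d(m^d-n^d)\big)^{1+\epsilon}\ll\big(H^2\,\rad(f(m,n))\big)^{1+\epsilon}$, which rearranges to exactly $\rad(f(m,n))\gg H^{d-2-\epsilon}$. The key point is that the two cheap terms $m^d,n^d$ are perfect powers and contribute only $\rad(m^d)\rad(n^d)\le H^2$ to the conductor; this single factor $H^2$ is the source of the ``$-2$.'' The obstruction to copying this verbatim is that a general squarefree form is not a difference of two $d$-th powers of linear forms, so there is no three-term relation in which two terms are high powers and the third is $f(m,n)$ itself.

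Conceptually the bound is the arithmetic shadow of the Riemann--Hurwitz formula for $\mathbb{P}^1$: the ``$-2$'' is its Euler characteristic, and the ABC conjecture is exactly the three-point ($\{0,1,\infty\}$) instance of the associated truncated second main theorem. What the theorem asks for is the $d$-point instance, attached to the divisor cut out by the $d$ roots of $f$, and the plan is to deduce it from the three-point case that ABC supplies. I would work over $K$, send three of the roots to $0,1,\infty$ by a Möbius transformation, and apply ABC to the resulting three-term relation among the corresponding factor values; the remaining $d-3$ factors must then be incorporated so that their combined conductor appears on the right and their size on the left, producing the extra $d-3$ units of exponent beyond the baseline of $1$.

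The step I expect to be the main obstacle is precisely this passage from three points to $d$ points. A single three-term application of ABC sees only three factors and yields only exponent $1$; moreover one cannot simply multiply several three-term estimates, because any two triples drawn from the $d$ factors share at least two factors, so the product double-counts exactly the shared conductors and wastes the very $H^2$ the gain was meant to beat --- naive iteration therefore collapses back to the trivial exponent. Overcoming this requires treating all $d$ factors simultaneously, either by invoking the $d$-point truncated second main theorem for $\mathbb{P}^1$ (which is known to follow from ABC) or by adapting Langevin's argument, together with the routine but necessary bookkeeping of the bounded resultant factors and the descent from $K$ back to $\mathbb{Q}$. Once the reduction to the product of linear forms is in place, I would either complete the argument along these lines or cite Langevin's theorem directly.
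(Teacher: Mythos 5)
You cannot be compared against the paper's proof here, because the paper gives none: Theorem \ref{thm8} is stated bare and used as a black box, exactly as in Klurman \cite{klurman}, where it is quoted from the literature (it is a theorem of Langevin, with a closely related treatment by Granville, that ABC implies $\rad\left(f\left(m,n\right)\right)\gg_{f,\epsilon}\max\left(|m|,|n|\right)^{\deg f-2-\epsilon}$ for binary forms without repeated factors). So the relevant question is whether your proposal stands on its own, and as a reduction-plus-citation it does. Your model computation for $x^d-y^d$ is correct and genuinely locates the source of the $-2$: from $H^d\ll\left(H^2\rad\left(f\left(m,n\right)\right)\right)^{1+\epsilon}$ one gets the stated bound after renaming $\epsilon$, and the factor $H^2$ is indeed the conductor contribution $\rad\left(m^dn^d\right)\leq|mn|$ of the two power terms. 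Your diagnosis of the obstruction is also the right one: a single three-term application of ABC yields only exponent $1$, and multiplying several such estimates double-counts shared conductors, so the passage from three points to $d$ points is the entire content of the theorem. Ending by citing Langevin is legitimate and in fact supplies more justification than the paper itself gives.

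Two caveats if you intended the sketch to be completable in place. First, the branch in which you would ``complete the argument along these lines'' is not carried out: sending three roots to $0,1,\infty$ by a M\"obius transformation and then ``incorporating'' the remaining $d-3$ factors so that their conductors and sizes land on the correct sides is precisely what Langevin's and Granville's proofs accomplish with real work over the splitting field, so as written that branch restates the goal rather than proving it; likewise the assertion that the $d$-point truncated second main theorem for $\mathbb{P}^1$ ``is known to follow from ABC'' is itself the theorem being proved and needs a reference. Second, some steps in your reduction need phrasing in terms of ideals and norms: one must assume $f\left(m,n\right)\neq 0$ (only finitely many coprime pairs fail this), the factors $b_im-a_in$ lie in $K$ rather than $\mathbb{Z}$, so ``pairwise coprime up to $O_f\left(1\right)$'' and the identification of $\rad\left(f\left(m,n\right)\right)$ with the radical of the product of factors must be routed through norms of ideals and the resultant bound; and a single factor can be as small as roughly $H^{-1}$ (by Liouville/Roth-type considerations), so ``each factor is $O\left(H\right)$ and the largest is $\gg H$'' is the right shape but the smallest factor also needs handling in the bookkeeping. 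None of this changes the verdict: with the citation to Langevin the statement is established, which is in substance the same (implicit) route the paper takes.
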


\begin{theorem}
Assume that the ABC conjecture is true. Let $f\left(X\right) \in \mathbb{Z}[X]$ be a polynomial of degree $d \geq 1$ without repeated roots. Fix $\epsilon > 0$. Then $$\rad\left(f\left(n\right)\right) \gg |n|^{d-1-\epsilon}$$.
\end{theorem}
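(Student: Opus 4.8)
The plan is to deduce this univariate statement from the homogeneous two-variable result in Theorem \ref{thm8} by a homogenization trick, being careful to recover the exponent $d-1$ rather than the $d-2$ that a naive homogenization would give.

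First I would homogenize. Writing $f(X)=a_dX^d+\cdots+a_0$ with $a_d\neq0$, set $\tilde f(X,Y)=Y^d f(X/Y)=\sum_{i=0}^d a_i X^i Y^{d-i}$, the degree-$d$ homogenization of $f$. Because $\deg f=d$ the leading coefficient $a_d$ is nonzero, so $Y\nmid \tilde f(X,Y)$; and since $f$ has no repeated roots, the finite linear factors $X-\alpha_i Y$ of $\tilde f$ (one for each root $\alpha_i$ of $f$) are pairwise distinct. The key move is then to pass to $F(X,Y)=Y\cdot\tilde f(X,Y)$, a homogeneous form of degree $d+1$. Its linear factors are $Y$ together with the $X-\alpha_i Y$; these are all distinct, since the factor $Y$ corresponds to the ``point at infinity,'' which differs from every finite root, and $X-\alpha_i Y$ cannot repeat as the $\alpha_i$ are distinct and $Y\nmid\tilde f$. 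Hence $F$ is homogeneous of degree $d+1$ with no repeated linear factors, and Theorem \ref{thm8} applies to it.

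Next I would specialize. The pair $(m,n)=(n,1)$ is coprime for every integer $n$, and $F(n,1)=1\cdot\tilde f(n,1)=f(n)$, so applying Theorem \ref{thm8} to $F$ yields $\rad\left(f(n)\right)=\rad\left(F(n,1)\right)\gg \max\left(|n|,1\right)^{\deg F-2-\epsilon}=|n|^{(d+1)-2-\epsilon}=|n|^{d-1-\epsilon}$, which is exactly the claimed bound. The reason for inserting the extra factor $Y$ is visible here: evaluating at $Y=1$ leaves $\rad\left(f(n)\right)$ unchanged while raising the effective degree from $d$ to $d+1$, so the $-2$ appearing in Theorem \ref{thm8} becomes an effective $-1$ in the univariate estimate.

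The part that needs the most care is not the estimate itself but the bookkeeping of linear factors: one must confirm that $F=Y\tilde f$ is genuinely free of repeated linear factors. This forces the checks that $\deg f$ is exactly $d$ (so $Y$ is not already a factor of $\tilde f$, which would give $Y^2\mid F$) and that $f$ has no repeated roots (so no $X-\alpha_i Y$ repeats, and in particular the factor $X$ occurs at most once in the event that $0$ is a root of $f$). Once those hypotheses of Theorem \ref{thm8} are verified, the coprimality $\gcd(n,1)=1$ and the identity $F(n,1)=f(n)$ are immediate, and no further computation is required.
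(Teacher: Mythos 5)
Your proposal is correct: homogenizing $f$ to $\tilde f$, multiplying by $Y$ to obtain a squarefree form $F$ of degree $d+1$, and specializing Theorem \ref{thm8} at the coprime pair $(n,1)$ is exactly the standard derivation of the univariate bound, and your factor bookkeeping (that $a_d\neq 0$ keeps $Y$ out of $\tilde f$, so $F$ has no repeated linear factors) is the one point that needed checking. The paper itself states this theorem without proof, immediately after Theorem \ref{thm8}, evidently intending precisely this reduction, so your argument fills the gap in the intended way.
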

\begin{prop}
Assume that the ABC conjecture is true. Suppose $P\left(x\right) \in \mathbb{Z}[x]$ of degree $> \lfloor {\frac{4n+4}{3}} \rfloor$ where $n = \nu_2\left(k\right) \geq 2$ has no repeated factors. Then, there are only finitely many integers $q$, such that $P\left(q\right)$ is an odd multiperfect number.
\end{prop}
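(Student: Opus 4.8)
The plan is to combine the radical upper bound for odd multiperfect numbers (Theorem~\ref{oddmain}) with the ABC-conditional radical lower bound for values of a polynomial having no repeated factors. Throughout, I fix the abundancy so that $\nu_2(k) = n$ is the constant appearing in the degree hypothesis; holding this $2$-adic valuation fixed is essential, since the exponent $\frac{4n+1}{4n+4}$ coming from Theorem~\ref{oddmain} must not vary with $q$.

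First, suppose $q$ is an integer, write $d = \deg P$, and assume $P(q) = m$ is an odd $k$-perfect number with $\nu_2(k) = n \geq 2$. Since $P$ has no repeated factors and $d \geq 1$ (indeed $d > \frac{4n+4}{3} \geq 4$), the preceding ABC-conditional theorem applied to $P$ gives, for any fixed $\epsilon > 0$,
\[\rad(P(q)) \gg |q|^{d - 1 - \epsilon}.\]
On the other hand, applying Theorem~\ref{oddmain} to $m = P(q)$, an odd multiperfect number whose abundancy satisfies $\nu_2(k) = n \geq 2$, yields
\[\rad(P(q)) = \rad(m) < m^{\frac{4n+1}{4n+4}} = P(q)^{\frac{4n+1}{4n+4}}.\]
Next I would use that $|P(q)| \asymp |q|^d$ as $|q| \to \infty$ (with leading coefficient absorbed into the implied constants), so that $P(q)^{\frac{4n+1}{4n+4}} \ll |q|^{d \cdot \frac{4n+1}{4n+4}}$. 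Chaining the two estimates gives
\[|q|^{d - 1 - \epsilon} \ll |q|^{d \cdot \frac{4n+1}{4n+4}}.\]
The degree hypothesis $d > \lfloor \frac{4n+4}{3} \rfloor$ forces $d > \frac{4n+4}{3}$, since $d$ is an integer and $\lfloor x \rfloor + 1 > x$ for every real $x$; this rearranges to $d - 1 > d \cdot \frac{4n+1}{4n+4}$. Hence I can fix $\epsilon > 0$ small enough that $d - 1 - \epsilon > d \cdot \frac{4n+1}{4n+4}$, making the exponent on the left strictly larger than the one on the right. That inequality can hold only for boundedly many $|q|$, so there are only finitely many integers $q$ with $P(q)$ an odd multiperfect number of the prescribed type.

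The main obstacle is not the inequality-chasing, which is routine, but the uniformity in the abundancy: the upper-bound exponent $\frac{4n+1}{4n+4}$ depends on $n = \nu_2(k)$, and as $n \to \infty$ it tends to $1$, defeating any fixed-degree comparison. The argument therefore requires $\nu_2(k)$ to be held fixed in advance, which is exactly what the degree hypothesis presupposes; I would state this explicitly to avoid the reading in which a family of $P(q)$ with unboundedly $2$-divisible abundancies evades the bound. A secondary point to handle carefully is the passage from $|P(q)|$ to $|q|^d$: this needs $|q|$ large, the leading behaviour of $P$, and the requirement that $P(q) > 0$ so that $P(q)$ can be multiperfect at all, all of which are arranged by discarding the finitely many small values of $q$ at the outset.
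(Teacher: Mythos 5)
Your proof is correct and follows essentially the same route as the paper: combine the radical upper bound $\rad(m) < m^{\frac{4n+1}{4n+4}}$ from Theorem~\ref{oddmain} with the ABC-conditional lower bound $\rad(P(q)) \gg |q|^{d-1-\epsilon}$, compare exponents, and use $d > \lfloor \frac{4n+4}{3}\rfloor$ (hence $d > \frac{4n+4}{3}$ strictly) to choose $\epsilon$ small enough for a contradiction at large $|q|$. Your explicit attention to holding $n = \nu_2(k)$ fixed, to $|P(q)| \asymp |q|^d$, and to discarding small $q$ only makes precise details the paper leaves implicit.
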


\begin{proof}
Suppose $P\left(q\right)$ is an odd multiperfect number with a large value of $q$, $n = \nu_2{k} \geq 2$, and $\deg{P} = d > \lfloor {\frac{4n+4}{3}} \rfloor$. Fix $\epsilon > 0$. By Theorems $1$, and $7$, $$q^{\frac{4n+1}{4n+4}d} \gg \rad\left(P\left(q\right)\right) \gg q^{d-1-\epsilon}$$ Combining the two, $$\frac{4n+1}{4n+4}d \geq d-1-\epsilon \implies d \leq \frac{4n+4}{3} \left(1+\epsilon\right)$$ Since it is possible to choose $\epsilon$ such that $d \leq \frac{4n+4}{3} \left(1+\epsilon\right) < \lfloor{\frac{4n+4}{3}} \rfloor + 1$, the contradiction implies the desired result.
\end{proof}

\begin{prop}
Assume that the ABC- conjecture is true. Let $f\left(x, y\right) \in \mathbb{Z}[x]$ be a homogeneous form of degree $> 2\lfloor {\frac{4n+4}{3}} \rfloor$ where $n = \nu_2\left(k\right) \geq 2$ without repeated linear factors. Then there are only finitely many perfect numbers of the form $f\left(m, n\right)$ for $m, n \in \mathbb{Z}$.
\end{prop}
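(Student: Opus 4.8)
The statement parallels the preceding proposition, with the one-variable ABC estimate replaced by its homogeneous two-variable counterpart, Theorem~\ref{thm8}. Here $n = \nu_2(k) \geq 2$ refers to the abundancy $k$ of the number $f(a,b)$, and since $\nu_2(k) \geq 2$ forces $4 \mid k$ (which is impossible for an ordinary perfect number), the relevant objects are in fact \emph{odd multiperfect} numbers with $4 \mid k$. I write the arguments of $f$ as $a,b$ to avoid clashing with $n$, and I treat the primitive case $\gcd(a,b)=1$ first.

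The plan is to sandwich $\rad(f(a,b))$ between an upper bound from Theorem~\ref{oddmain} and a lower bound from Theorem~\ref{thm8}. Suppose for contradiction that $f(a,b)$ is an odd multiperfect number for infinitely many coprime pairs $(a,b)$, necessarily with $\max(|a|,|b|) \to \infty$. Set $d = \deg f$ and $\beta = \frac{4n+1}{4n+4}$. Because $f$ is homogeneous of degree $d$, there is a constant $C_f$ with $f(a,b) \leq C_f \max(|a|,|b|)^{d}$, so Theorem~\ref{oddmain} gives
\[ \rad(f(a,b)) < f(a,b)^{\beta} \ll \max(|a|,|b|)^{d\beta}. \]
On the other hand, since $f$ has no repeated linear factors, fixing $\epsilon > 0$ and applying Theorem~\ref{thm8} yields
\[ \rad(f(a,b)) \gg \max(|a|,|b|)^{d-2-\epsilon}. \]

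Combining the two displays and letting $\max(|a|,|b|) \to \infty$ forces $d-2-\epsilon \leq d\beta$, i.e. $d(1-\beta) \leq 2+\epsilon$. Since $1-\beta = \frac{3}{4n+4}$, this rearranges to $d \leq \frac{(2+\epsilon)(4n+4)}{3}$. As in the previous proposition, one chooses $\epsilon$ small enough that $\frac{(2+\epsilon)(4n+4)}{3} < \lfloor \frac{2(4n+4)}{3} \rfloor + 1$, which is possible because $\frac{2(4n+4)}{3} < \lfloor \frac{2(4n+4)}{3} \rfloor + 1$. This gives $d \leq \lfloor \frac{2(4n+4)}{3} \rfloor$, contradicting the degree hypothesis and settling the primitive case.

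Two points deserve care. First, Theorem~\ref{thm8} requires $\gcd(a,b)=1$, so the general case must be reduced to the primitive one: writing $g = \gcd(a,b)$ and $a = ga'$, $b = gb'$ gives $f(a,b) = g^{d} f(a',b')$, and I would argue that the combined radical bounds again force $\max(|a|,|b|) = g\max(|a'|,|b'|)$ to be bounded, leaving only finitely many values. I expect this gcd bookkeeping to be the main technical obstacle, since $g$ and $f(a',b')$ may share prime factors and the clean homogeneity estimate is only available after extracting $g^{d}$. Second, the floor arithmetic must be matched to the stated threshold: the argument above naturally produces $\lfloor \frac{2(4n+4)}{3} \rfloor$, which coincides with $2\lfloor \frac{4n+4}{3} \rfloor$ except when $n \equiv 1 \pmod 3$, so I would either record the threshold as $\lfloor \frac{2(4n+4)}{3} \rfloor$ or check the boundary residue $n \equiv 1 \pmod 3$ by hand.
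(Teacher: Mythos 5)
Your proof is essentially the paper's own: sandwich $\rad\left(f\left(a,b\right)\right)$ between the upper bound $f\left(a,b\right)^{\frac{4n+1}{4n+4}}$ from Theorem~\ref{oddmain} and the lower bound $\max\left(|a|,|b|\right)^{d-2-\epsilon}$ from Theorem~\ref{thm8}, then choose $\epsilon$ small to contradict the degree hypothesis. The two caveats you flag are genuine and are silently glossed over in the paper's two-line proof — Theorem~\ref{thm8} is only stated for coprime arguments, so the reduction via $g=\gcd\left(a,b\right)$ is needed, and the threshold produced by the argument is $\lfloor \frac{2\left(4n+4\right)}{3}\rfloor$, which exceeds the stated $2\lfloor \frac{4n+4}{3}\rfloor$ by $1$ when $n \equiv 1 \pmod 3$ — so your version is in fact more careful than the original.
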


\begin{proof}
By Theorems \ref{oddmain} and \ref{thm8}, we have:

$$
\max\left(|m|, |n|\right)^{\frac{4n+1}{4n+4}d} \gg \rad\left(f\left(m, n\right)\right) \gg \max\left(|m|, |n|\right)^{d-2-\epsilon}$$ $$ \implies \frac{4n+1}{4n+4}d > d - 2 - \epsilon
\implies  d > \frac{4n+4}{3}\left(2+\epsilon\right)
$$

Choosing $\epsilon$ small enough yields the desired conclusion.
\end{proof}

\subsection{Distance Between Perfect and Multiperfect Numbers} Luca and Pomerance~\cite{lucap} showed that under the assumption of the ABC conjecture, the equation 
\begin{equation}
x - y = k
\end{equation}
has only finitely many solutions in perfect numbers $x$ and $y$ when $k$ is odd. They also prove that \textit{Equation 1} has finitely many solutions when $x, y$ are perfect and squarefull \cite{luca}. We consider similar question: how close can perfect ad multiperfect number come together? 
\begin{prop}\label{prop12}
Assume the ABC Conjecture holds. Then there exists finitely many solutions to \textit{Equation 1} when $k$ is odd, $x$ is an even perfect number and $y$ is an odd multiperfect number. 
\end{prop}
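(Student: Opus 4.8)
The plan is to adapt the mechanism behind the polynomial propositions above: squeeze a lower bound on $\rad\left(y\right)$ out of the ABC conjecture and confront it with the upper bound provided by Theorem \ref{oddmain}. Fix the odd integer $k$ and consider a solution $\left(x,y\right)$ of $x-y=k$ with $x$ large, writing the even perfect number as $x=2^{p-1}M$ where $M=2^{p}-1$ is a Mersenne prime. The essential structural input is that the explicit shape of an even perfect number yields a radical bound far stronger than the generic one of Theorem \ref{evenmain}, namely $\rad\left(x\right)=2M<4\sqrt{x}$, so that $\rad\left(x\right)\ll x^{1/2}$; since $k$ is fixed we also have $x\asymp y$ and $\rad\left(k\right)=O\left(1\right)$.

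First I would settle coprimality. Any common divisor of $x$ and $y$ divides $k$, hence is odd (as $y$ is odd) and divides $M$; once $M=2^{p}-1$ exceeds $|k|$, primality of $M$ forces $\gcd\left(x,y\right)=1$, so $x$, $y$, $k$ are pairwise coprime for all large solutions. Applying the ABC conjecture to $y+k=x$ then yields \[ x\ll_{\epsilon}\rad\left(xyk\right)^{1+\epsilon}\le\left(\rad\left(x\right)\rad\left(y\right)\rad\left(k\right)\right)^{1+\epsilon}\ll_{\epsilon}\left(\sqrt{x}\,\rad\left(y\right)\right)^{1+\epsilon}, \] which rearranges to the lower bound $\rad\left(y\right)\gg y^{1/2-\epsilon}$. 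On the other hand, writing the abundancy of $y$ as $2^{n}t$ with $t$ odd, Theorem \ref{oddmain} supplies the upper bound $\rad\left(y\right)\le y^{1/2}$ when $n=0$, $\rad\left(y\right)<y^{9/14}$ when $n=1$, and $\rad\left(y\right)<y^{\left(4n+1\right)/\left(4n+4\right)}$ when $n\ge2$.

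The step I expect to be the real obstacle is reconciling these two estimates, since they meet exactly at the conjectural threshold $1/2$: the ABC lower bound $y^{1/2-\epsilon}$ never exceeds an upper bound $y^{\theta}$ with $\theta\ge\tfrac12$, so the crude comparison above produces no contradiction, and this obstruction is tightest precisely in the odd-abundancy case, where $y$ is a perfect square and $\theta=\tfrac12$. To create slack one must use the fine multiplicative structure on both sides rather than only the exponents: I would write $x=Mv^{2}$ with $v=2^{\left(p-1\right)/2}$, so that the non-squarefree part of $x$ is the pure prime power $2^{p-1}$, and $y=Su^{2}$ with $S$ the squarefree product of the primes dividing $y$ to an odd exponent, and apply ABC to the refined identity $Mv^{2}-Su^{2}=k$, exploiting the perfect-power part $v^{2}=2^{p-1}$ (whose exponent tends to infinity) to gain the factor needed to drop strictly below the $1/2$ boundary. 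Making this gain rigorous — showing that the radical of the governing product is genuinely $\ll x^{1-\delta}$ and not merely $\ll x$ — is the crux, and it is exactly here that the even--odd argument of Luca and Pomerance for ordinary perfect numbers must be imported and combined with the multiperfect bound of Theorem \ref{oddmain}.
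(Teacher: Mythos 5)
Your setup is sound as far as it goes: the coprimality reduction (a common prime divisor of $x$ and $y$ would divide $k$ and, being odd, would have to equal the Mersenne prime $M = 2^p - 1$, which exceeds $|k|$ for large $x$), the bound $\rad\left(x\right) = 2M \leq 2\sqrt{2x}$, and the ABC deduction $\rad\left(y\right) \gg y^{1/2 - \epsilon}$ are all correct, and this is exactly the frame in which the paper places the result --- the paper gives no self-contained argument for Proposition \ref{prop12}, saying only that the proof is that of Luca and Pomerance with the bounds of Theorems \ref{oddmain} and \ref{evenmain} inserted. But your attempt then stops at an acknowledged unproven ``crux,'' so it is not a proof. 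The gap is genuine: as you yourself compute, every exponent supplied by Theorem \ref{oddmain} is at least $\tfrac{1}{2}$ (namely $\rad\left(y\right) \leq y^{1/2}$ for odd abundancy, $y^{9/14}$ for abundancy $\equiv 2 \pmod 4$, and $y^{\left(4n+1\right)/\left(4n+4\right)}$ otherwise), while ABC only yields the lower bound $\rad\left(y\right) \gg y^{1/2 - \epsilon}$, so no contradiction is available from exponent comparison alone in any abundancy class.

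The repair you sketch does not create the missing slack. Applying ABC to $Mv^{2} - Su^{2} = k$ is applying it to the identical ternary equation $x - y = k$ with the identical radical $\rad\left(xyk\right)$: the perfect-power part $v^{2} = 2^{p-1}$ is precisely what already produced $\rad\left(x\right) \ll \sqrt{x}$ in your first application, so the ``refined identity'' returns the same bound $\rad\left(y\right) \gg y^{1/2-\epsilon}$ and nothing more. What Luca and Pomerance actually exploit in the perfect-number case is not a bare radical exponent but the Euler form $y = q^{e}m^{2}$ with its distinguished special prime; that structure has no analogue in the case you correctly single out as tightest, odd abundancy, where $y$ is a perfect square and carries no special prime at all. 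So your diagnosis of the obstruction is accurate --- and it usefully exposes how much is being compressed into the paper's one-line ``insert the bounds'' remark, since a literal insertion runs into exactly the wall you describe --- but diagnosing the obstruction is where the attempt ends: the decisive step, extracting a saving strictly below the exponent $\tfrac{1}{2}$ or producing any substitute contradiction, is missing.
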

\begin{prop}\label{prop13}
Assume the ABC Conjecture holds. Then there exists finitely many solutions to \textit{Equation 1} when $k$ is odd, $x$ is an even perfect number and $y$ is an even multiperfect number.
\end{prop}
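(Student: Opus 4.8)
The plan is to begin with a parity check, which in fact settles the statement outright. Since $x$ is an even perfect number it has the Euclid--Euler form $x = 2^{p-1}(2^p-1)$ and is therefore even, and $y$ is even by hypothesis; hence $x - y$ is even. But $k$ is odd, so the equation $x - y = k$ has \emph{no} solutions at all, and in particular only finitely many. This is the clean resolution I would record, and it is the honest first observation: an even--even difference can never equal an odd gap. Notably, the ABC conjecture plays no role here, in contrast with Proposition \ref{prop12}, where $y$ is \emph{odd} multiperfect, so $x-y$ is odd and consistent with odd $k$; there the parity escape is unavailable and one genuinely invokes ABC together with the radical bounds.

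For completeness I would also indicate the route one would attempt if the hypothesis instead read $k$ even, so that parity no longer forbids solutions. Writing $x = y + k$ and applying the ABC conjecture to the triple $(y, k, x)$ --- after dividing out the bounded common factor $\gcd(y,k)\mid k$ to achieve coprimality --- yields $\max(x,y) \ll_{\epsilon} \rad(xyk)^{1+\epsilon} \ll_{\epsilon} \left(\rad(x)\rad(y)\right)^{1+\epsilon}$, with the fixed $\rad(k)$ absorbed into the constant. For the perfect number $x$ I would not apply Theorem \ref{evenmain} directly (its exponent tends to $1$ as $p$ grows), but rather use the explicit form: $\rad(x) = 2(2^p-1)$ satisfies $\rad(x)^2 = 4(2^p-1)^2 < 2^{p+2}(2^p-1) = 8x$, so $\rad(x) \ll \sqrt{x}$. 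For $y$, Theorem \ref{evenmain} supplies $\rad(y) < y^{\beta}$ with $\beta = \tfrac{2a+2\alpha+1}{2a+2\alpha+2}$, where $a$ is the $2$-adic valuation of the abundancy of $y$ and $\alpha = \nu_2(y)$.

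The hard part --- and the reason the even--even case cannot be pushed through by ABC the way Proposition \ref{prop12} is --- would be the exponent bookkeeping. Because the gap is bounded while $x, y \to \infty$, one has $x \sim y$, so the inequality collapses to $y \ll_\epsilon y^{(1/2 + \beta)(1+\epsilon)}$, and a contradiction requires $\tfrac12 + \beta < 1$, i.e. $\beta < \tfrac12$. But Theorem \ref{evenmain} only ever gives $\beta > \tfrac12$. Refining by the square identity $8x+1 = (2^{p+1}-1)^2$ and applying ABC to $8y + (8k+1) = (2^{p+1}-1)^2$ does not rescue this either, since $2^{p+1}-1$ can itself be prime and then contributes a radical of size $\approx \sqrt{x}$. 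Thus no available radical bound makes the ABC argument close in the even--even regime, which is precisely why, for the stated proposition with $k$ odd, the parity obstruction is not merely the simplest proof but essentially the only one --- and it is decisive.
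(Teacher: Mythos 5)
Your parity argument is correct and complete for the statement as written: $x$ and $y$ are both even by hypothesis, so $x-y$ is even and can never equal the odd number $k$; the solution set is empty, hence finite, and the ABC conjecture is never needed. This is a genuinely different route from the paper, which gives no standalone proof but instead remarks that Propositions \ref{prop12} and \ref{prop13} follow the Luca--Pomerance ABC argument with the radical bounds of Theorems \ref{oddmain} and \ref{evenmain} inserted. Your observation shows that for Proposition \ref{prop13} this machinery is superfluous --- and your supplementary analysis correctly explains that it could not close anyway: since the gap is fixed, $x \sim y$; the even perfect number satisfies $\rad(x) \ll \sqrt{x}$ (your computation $\rad(x)^2 = 4(2^p-1)^2 < 8x$ is right), while Theorem \ref{evenmain} only gives $\rad(y) < y^{\beta}$ with $\beta = \frac{2n+2\alpha+1}{2n+2\alpha+2} \geq \frac{3}{4} > \frac{1}{2}$, so ABC yields merely $y \ll_{\epsilon} y^{\left(\frac{1}{2}+\beta\right)(1+\epsilon)}$ with exponent exceeding $1$, and the square identity $8x+1 = \left(2^{p+1}-1\right)^2$ suffers the same defect. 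Your diagnosis also mirrors the internal structure of Luca--Pomerance's own theorem: for odd $k$, parity already excludes even--even and odd--odd pairs of perfect numbers, and ABC is genuinely invoked only in the mixed-parity case --- which is exactly Proposition \ref{prop12}, where $y$ is odd multiperfect. In short, your proof buys economy and exposes that Proposition \ref{prop13} as stated is vacuously true; the paper's intended proof-by-analogy would only be meaningful under a restated hypothesis (e.g., $k$ even), and in that regime, as you observe, the available radical bounds are too weak for the ABC argument to produce a contradiction.
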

The proofs of Propositions \ref{prop12} and \ref{prop13} are analogous to those of Luca and Pomerance in \cite{lucap}. The only difference is inserting the bounds on radical in Theorems \ref{oddmain} and \ref{evenmain} for even and odd multiperfect numbers. 

\subsection{Multirepdigit multiperfect numbers}

Pollack recently proved~ \cite{Pollack} that there exist finitely many \textit{"repdigit"} perfect and multiperfect numbers. Along with Luca, he also showed that there exist only finitely many repdigit multiperfect numbers in any base \textit{g} \cite{lucapollack}. We will prove the following extension:
\begin{theorem}\label{multirep}
There exist only finitely many multiperfect \textit{multi}repdigit numbers in any base \textit{g} if the abundancy of the multiperfect number is a power of $2.$ 
\end{theorem}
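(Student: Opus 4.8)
The plan is to fix the base $g$ and the abundancy $k=2^{n}$, write a multirepdigit as $N=b\cdot\frac{g^{Ls}-1}{g^{L}-1}$, where $b$ is the value of the repeated block, $L$ its length, and $s\ge 2$ the number of copies, and then to bound the total number of base-$g$ digits $\ell=Ls$; once $\ell$ is bounded the finiteness is immediate, since only finitely many integers have a bounded number of digits in a fixed base. Throughout I would use the size estimate $g^{\ell-L}\le N< g^{\ell}$ together with the cyclotomic factorization $\frac{g^{\ell}-1}{g^{L}-1}=\prod_{d\mid \ell,\; d\nmid L}\Phi_{d}(g)$, since $\rad(N)\ge \rad\bigl(\tfrac{g^{\ell}-1}{g^{L}-1}\bigr)$ lets me transfer information about the quotient directly to $N$.

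The argument rests on two opposing inputs. The first is the radical upper bound: applying Theorem \ref{oddmain} when $N$ is odd and Theorem \ref{evenmain} when $N$ is even gives $\rad(N)<N^{\beta}$ with $\beta<1$ depending only on $n$ (and on $\nu_2(N)$), so the powerful part $N/\rad(N)>N^{1-\beta}\gg g^{\ell(1-\beta)}$ grows with $\ell$; thus for large $\ell$ the number $N$ must be very far from squarefree. The second input is a $2$-adic count, and this is where the power-of-two abundancy enters essentially. Since $\sigma(N)=2^{n}N$ we have $\nu_2(\sigma(N))=n+\nu_2(N)$, and applying Lemma \ref{3} to each prime power $p^{e}\,\parallel\,N$ (even exponents contribute $0$ to $\nu_2(\sigma)$, odd exponents contribute at least $1$) shows that $N$ has at most $n+\nu_2(N)$ odd primes occurring to an odd exponent. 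In other words almost all prime factors of $N$ must occur to an even exponent.

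I would then play these against the cyclotomic factors. By Bang--Zsygmondy, for every divisor $d\mid\ell$ with $d\nmid L$ above a fixed threshold, $\Phi_{d}(g)$ has a primitive prime divisor $P_{d}\equiv 1\pmod d$; these are distinct across $d$, satisfy $P_{d}>d$, and for those $d$ with $P_{d}>b$ they cannot divide the block value $b$. Outside a sparse Wieferich-type set one also has $P_{d}\,\parallel\,\Phi_{d}(g)$, so such a $P_{d}$ occurs to the first — hence odd — power in $N$. In the regime where $L$ stays bounded while the number of copies $s$ grows, $\ell=Ls$ acquires many such divisors, manufacturing more than $n+\nu_2(N)$ odd-exponent primes and contradicting the $2$-adic count; this bounds $\ell$ in that regime and forces any surviving cyclotomic factors to be nearly perfect squares, which I would push to a contradiction by comparing their powerful part against $g^{\ell(1-\beta)}$.

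The main obstacle, and the case I expect to absorb most of the work, is when $\ell$ has very few divisors — the extreme being $\ell=p^{a}$ a prime power with $L=p^{a-1}$ and $s=p$, so that $\frac{g^{\ell}-1}{g^{L}-1}=\Phi_{\ell}(g)$ is a single large cyclotomic value (for $p=2$ these are precisely the two-block numbers $b\,(g^{2^{a-1}}+1)$); the complementary regime of a long block with few copies collapses into the same difficulty. Here the $2$-adic count gives no leverage, and one must instead rule out $\Phi_{\ell}(g)$ carrying a large powerful part by itself. My plan for this is a divisibility descent: take the top primitive prime $P\equiv 1\pmod\ell$, use $\sigma(N)=2^{n}N$ to force, from each prime $p$ with $p^{2}\parallel N$, a smaller prime dividing $1+p+p^{2}$ that again divides $N$, and iterate; since all primitive primes exceed $\ell\ge 3\cdot 2^{e}$, the resulting chain is exactly the configuration forbidden by Lemma \ref{LOOPY}. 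Making this descent terminate — equivalently, bounding the powerful part of a single cyclotomic value $\Phi_{\ell}(g)$, a statement that is transparent under the ABC conjecture but delicate without it — is the crux of the proof.
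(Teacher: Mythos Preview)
Your approach is entirely different from the paper's, and it carries a real gap.

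The paper does not use the radical bounds (Theorems~\ref{oddmain} and~\ref{evenmain}), the $2$-adic valuation from Lemma~\ref{3}, Zsygmondy, or Lemma~\ref{LOOPY} in this section at all. Its argument for Theorem~\ref{multirep} is purely analytic, following Luca--Pollack: writing a multirepdigit as $D\cdot U_m$ with $U_m=(g^m-1)/(g-1)$, the key step is the supporting lemma establishing $\log\bigl(\sigma(U_m)/U_m\bigr)\ll_g \bigl(\log(e\,\omega(m))\bigr)^{2}$, obtained by estimating $\sum_{p\mid U_m}\tfrac{1}{p-1}$ through the rank-of-apparition structure (each such $p$ satisfies $p\equiv 1\pmod{z(p)}$ with $z(p)\mid m$) together with a Mertens-type identity. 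When $m=2^{s}$ one has $\omega(m)=1$, so $\sigma(U_{2^s})/U_{2^s}$ is bounded uniformly in $s$, and the abundancy of $D\cdot U_{2^s}$ is then controlled via $\sigma(D)/D$. Note too that the paper's ``multirepdigit'' is $D\cdot U_m$ with $D\in\mathbb{N}$, not your block-repdigit $b\cdot\tfrac{g^{Ls}-1}{g^{L}-1}$, and the power-of-two restriction actually used in the argument is on the digit count $m$ (the chain $U_1\mid U_2\mid U_4\mid\cdots$ and the appeal to $\omega(2^s)=1$ make this explicit), despite the wording of the theorem.

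Your plan stalls exactly where you say it does. In the single-cyclotomic-factor regime $\ell=p^{a}$, $L=p^{a-1}$, the $2$-adic count only limits the number of odd-exponent primes to $n+\nu_{2}(N)$, which is useless once $\nu_{2}(N)$ is large; the exponent $\beta$ in Theorem~\ref{evenmain} likewise tends to $1$ as $\nu_{2}(N)\to\infty$, so the radical bound degenerates; and bounding the powerful part of a single $\Phi_{\ell}(g)$ unconditionally is a Wieferich-type question that no descent of this kind will settle. Lemma~\ref{LOOPY} is also not the tool you want: it forbids chains $p_{j}=k_{1}p_{i}-1,\ldots$ arising from $p_{i}\mid 1+p_{j}$, i.e.\ from primes appearing to the \emph{first} power, whereas your descent from $p^{2}\parallel N$ produces divisors of $1+p+p^{2}$, a different recursion the lemma does not address. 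The paper sidesteps all of this by bounding $\sigma(U_m)/U_m$ directly rather than attacking the multiplicative structure of $N$.
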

 While we define repdigit numbers in the following way: consider the Lucas sequence  
\[
U_n = \frac{g^n - 1}{g - 1} ,
\]
 the repdigit number in some base $g$ will be defined as
\[
D \cdot \frac{g^n - 1}{g - 1};  \quad D \in \{1, 2, \cdots g-1\}. 
\]
On the other hand, a multirepdigit number will be the case when $D \in \mathbb{N}$, rather than being restricted by its base. We will focus on the bounding of multirep digit numbers when the abundancy is $2^s$ for a positive integer $s$.
$\\$ $\\$
We know that $U_1 \mid U_2 \mid U_4 \mid U_8 \mid \cdots \mid U_{2^s} \mid \cdots$. Thus, we know that $\frac{\sigma\left(D \cdot U_{2^s}\right)}{D \cdot U_{2^s}}$ will be strictly increasing when $s = 1, 2, \cdots$. We also know that $\frac{\sigma\left(n\right)}{n} = \sum_{d \mid n} {\frac{1}{d}}$. Since $D$ is not bounded (it ranges throughout the natural numbers), we will work on restricting the value of $l$ rather than $D$, where $l$ is the abundancy of the multiperfect number. We know that 
\[
\frac{\sigma\left(D\cdot U_{2^s}\right)}{D \cdot U_{2^s}} \leq \frac{\sigma\left(U_{2^s}\right)}{U_{2^s}} \cdot \frac{\sigma\left(D\right)}{D} \ll_{g} \frac{\sigma\left(U_{2^s}\right)}{U_{2^s}}.
\] We can prove that $\frac{\sigma\left(U_{2^s}\right)}{U_{2^s}}$ is bounded by some constant in the following way. 

\begin{lemma}
\[
\sum_{n \in \mathcal{P}^{*}}{\frac{\log p}{p - 1}} = \sum_{p \in \mathcal{P}}{\frac{\log p}{p - 1}} \prod_{p \in \mathcal{P}} {\left(1-\frac{1}{p}\right)}^{-1},
\]
where $\mathcal{P}$ is a finite set of primes and $\mathcal{P}^{*}$ is the  set of natural numbers all of whose prime factors belong to $\mathcal{P}$. 
\end{lemma}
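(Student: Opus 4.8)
The plan is to recognize this as a standard Dirichlet-convolution identity and prove it by expanding the logarithm via the von Mangoldt function. Reading the left-hand summand as $\frac{\log n}{n}$, the key input is the elementary identity $\log n = \sum_{d \mid n} \Lambda(d)$, where $\Lambda$ is the von Mangoldt function (so that $\Lambda(p^k) = \log p$ for prime powers and $\Lambda$ vanishes otherwise). Because $\mathcal{P}$ is \emph{finite}, every series appearing below is a product of convergent geometric series with nonnegative terms, so there is no convergence issue and all rearrangements are justified by absolute convergence.

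First I would substitute this identity into the left-hand side and interchange the order of summation:
\[
\sum_{n \in \mathcal{P}^{*}} \frac{\log n}{n} = \sum_{n \in \mathcal{P}^{*}} \frac{1}{n} \sum_{d \mid n} \Lambda(d) = \sum_{d \in \mathcal{P}^{*}} \frac{\Lambda(d)}{d} \sum_{e \in \mathcal{P}^{*}} \frac{1}{e}.
\]
The crucial step is the factorization in the last equality: since $n \in \mathcal{P}^{*}$ and $d \mid n$, both $d$ and the cofactor $e = n/d$ lie in $\mathcal{P}^{*}$, and conversely unique factorization gives a bijection between $n \in \mathcal{P}^{*}$ and pairs $(d,e) \in \mathcal{P}^{*} \times \mathcal{P}^{*}$ via $n = de$. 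This is what turns the convolution into a product of two independent sums.

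To finish I would evaluate the two factors separately. For the second factor, the Euler product over the finite set $\mathcal{P}$ yields $\sum_{e \in \mathcal{P}^{*}} \frac{1}{e} = \prod_{p \in \mathcal{P}} \left(1 - \frac{1}{p}\right)^{-1}$, which is exactly the product on the right-hand side. For the first factor, since $\Lambda$ is supported on prime powers $p^k$ with $p \in \mathcal{P}$, I would sum the geometric series $\sum_{k \geq 1} p^{-k} = \frac{1}{p-1}$ to obtain
\[
\sum_{d \in \mathcal{P}^{*}} \frac{\Lambda(d)}{d} = \sum_{p \in \mathcal{P}} \log p \sum_{k \geq 1} \frac{1}{p^{k}} = \sum_{p \in \mathcal{P}} \frac{\log p}{p - 1}.
\]
Multiplying the two evaluated factors gives the claimed identity.

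There is no genuine obstacle here; the only points requiring care are the interchange of summation and the bijection $n \leftrightarrow (d,e)$, both of which are clean because $\mathcal{P}$ is finite and all terms are positive. The one thing I would verify before writing the argument is that the left-hand summand is indeed $\frac{\log n}{n}$ rather than the $n$-independent expression $\frac{\log p}{p-1}$ as literally printed, since the $\frac{\log n}{n}$ reading is what makes the two sides agree and is precisely the quantity needed to bound $\frac{\sigma\left(U_{2^s}\right)}{U_{2^s}}$ in the surrounding repdigit argument.
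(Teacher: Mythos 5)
Your proposal is correct and takes essentially the same route as the paper's own proof: expand $\log n = \sum_{d \mid n} \Lambda(d)$, interchange the absolutely convergent sums via the bijection $n \leftrightarrow (d,e)$ with $n = de$, evaluate $\sum_{e \in \mathcal{P}^{*}} \frac{1}{e}$ by the Euler product over the finite set $\mathcal{P}$, and evaluate $\sum_{d \in \mathcal{P}^{*}} \frac{\Lambda(d)}{d}$ by geometric series to get $\sum_{p \in \mathcal{P}} \frac{\log p}{p-1}$. You are also right that the left-hand summand as printed is a typo for $\frac{\log n}{n}$ --- the paper's proof manifestly computes that quantity --- and your write-up is in fact more careful than the paper's, whose displayed interchange drops the factor $\frac{1}{d}$ on $\Lambda(d)$.
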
\begin{proof} We will work with each term independently. To begin, we will insert the Von-Mangoldt Function.
\[
\sum_{d \in \mathcal{P}^{*}} \Lambda\left(d\right) \sum_{\substack{d \in \mathcal{P}^{*} \\ d|n}} {\frac{1}{n}} \implies \sum_{d \in \mathcal{P}^{*}} \Lambda\left(d\right) \sum_{n^\prime \in \mathcal{P}^{*}} {\frac{1}{n^\prime}}
\]
We may now insert Euler Product to obtain
\[
\prod_{p \in \mathcal{P}} {\left(1-\frac{1}{p}\right)}^{-1}.
\]
Now we will deal with $\sum_{d \in \mathcal{P}^{*}} \Lambda\left(d\right)$. This is just summing over all $d|n$ of $\Lambda\left(d\right)$, and thus we can substitute $\log p$, for some $p \in \mathbb{P}$ back into the expression. We need to then reinsert all the prime factors that do not divide $n$. By infinite geometric series, we obtain
\[
\sum_{p \in \mathcal{P}} \frac{\log p}{p - 1}
\]
and we are done. \end{proof}

\begin{lemma}
\[
\log \frac{\sigma\left(U_m\right)}{U_m} \ll_{g} \log\left(e \cdot \omega\left(m\right)\right)^2,
\]
where $m > 1$ and is an integer.
\end{lemma}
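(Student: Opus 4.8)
The plan is to pass from the multiplicative statement to an additive one and then reduce everything to counting the primes dividing $U_m$ according to the multiplicative order of $g$ modulo each of them. First I would use the elementary bound $\frac{\sigma(n)}{n}=\prod_{p^{a}\parallel n}\frac{1-p^{-(a+1)}}{1-p^{-1}}<\prod_{p\mid n}\left(1-\frac1p\right)^{-1}$, which after taking logarithms and using $-\log(1-x)\le\frac{x}{1-x}$ with $x=\frac1p$ gives $\log\frac{\sigma(U_m)}{U_m}\le\sum_{p\mid U_m}\frac{1}{p-1}$. Thus it suffices to show $\sum_{p\mid U_m}\frac{1}{p-1}\ll_g\log\!\left(e\,\omega(m)\right)^{2}$.

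Next I would organize the primes dividing $U_m=\prod_{d\mid m,\,d>1}\Phi_d(g)$ (the cyclotomic factorization of $\frac{g^m-1}{g-1}$) by the order $d=\mathrm{ord}_p(g)$, which is a divisor of $m$. A prime $p\mid\Phi_d(g)$ is either primitive, with $\mathrm{ord}_p(g)=d$, or intrinsic, in which case $p\mid d\mid m$; the intrinsic primes therefore all divide $m$, so there are at most $\omega(m)$ of them and they contribute at most $\sum_{p\mid m}\frac{1}{p-1}$. The heart of the argument is the primitive contribution: a primitive prime of order $d$ lies in the residue class $1\bmod d$, so if $q_1<q_2<\cdots$ are these primes then at least $j$ integers congruent to $1$ mod $d$ lie in $(1,q_j]$, whence $q_j-1\ge jd$. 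Combining this with the count $N_d$ of such primes, bounded via $(d+1)^{N_d}\le\Phi_d(g)\le(g+1)^{\varphi(d)}$ so that $N_d\le\varphi(d)\log(g+1)/\log(d+1)\le d\log(g+1)$, yields $\sum_{\mathrm{ord}_p(g)=d}\frac{1}{p-1}\le\sum_{j=1}^{N_d}\frac{1}{jd}=\frac{H_{N_d}}{d}\ll_g\frac{\log(d+2)}{d}$.

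Summing over $d\mid m$ and absorbing the intrinsic contribution (which is $\ll\log(e\,\omega(m))$) reduces the problem to estimating $\sum_{d\mid m}\frac{\log d}{d}$ and $\sum_{d\mid m}\frac1d$, since $\frac{\log(d+2)}{d}\ll\frac{\log d}{d}+\frac1d$. For the second I would use $\sum_{d\mid m}\frac1d<\prod_{p\mid m}(1-1/p)^{-1}$, and for the first I would apply the previous lemma with $\mathcal P$ the set of prime divisors of $m$, giving $\sum_{d\mid m}\frac{\log d}{d}\le\sum_{n\in\mathcal P^{*}}\frac{\log n}{n}=\Big(\sum_{p\mid m}\frac{\log p}{p-1}\Big)\prod_{p\mid m}\Big(1-\frac1p\Big)^{-1}$. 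Finally, Mertens' theorems together with the prime number theorem estimate $p_{\omega(m)}\approx\omega(m)\log\omega(m)$ for the largest possible prime divisor show that each of $\sum_{p\mid m}\frac{\log p}{p-1}$ and $\prod_{p\mid m}(1-1/p)^{-1}$ is $\ll\log(e\,\omega(m))$, both being extremized when the prime divisors of $m$ are the $\omega(m)$ smallest primes. Multiplying the two bounds produces the claimed $(\log(e\,\omega(m)))^{2}$.

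The step I expect to be the main obstacle is the primitive estimate: the naive bound $\frac{1}{p-1}\le\frac1d$ applied to each of the up-to-$N_d\approx\varphi(d)/\log d$ primes of order $d$ would only give $\sum_{d\mid m}\frac{\varphi(d)}{d\log d}$, which already diverges for $m=2^s$; it is precisely the arithmetic-progression refinement $q_j-1\ge jd$ that replaces the factor $N_d$ by $H_{N_d}\ll\log(d+2)$ and rescues convergence. Extracting the $\omega(m)$-dependence exactly (rather than a bound in terms of $m$) in the two Mertens estimates is the other point that requires care.
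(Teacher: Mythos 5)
Your proof is correct and takes essentially the same approach as the paper's: reduce to bounding $\sum_{p \mid U_m} \frac{1}{p-1}$, group the primes by the divisor $d \mid m$ equal to the order of $g$ modulo $p$, combine $p \equiv 1 \pmod{d}$ with a size bound on the $d$-th factor (your $\Phi_d(g) \leq (g+1)^{\varphi(d)}$ versus the paper's $U_d < g^d$) and a harmonic sum to get $\ll_g \frac{\log(ed)}{d}$ per class, then sum over $d \mid m$ using the preceding Euler-product lemma together with Mertens' estimates and the prime number theorem. Your remaining deviations --- handling intrinsic primes via $p \mid d \mid m$ instead of the paper's separate $O_g(1)$ treatment of $p \mid g-1$, and applying the identity with the actual prime divisors of $m$ before invoking monotonicity to pass to the $\omega(m)$ smallest primes (the paper replaces divisors by smooth numbers over the smallest primes first) --- are cosmetic reorderings of the same argument.
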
 \begin{proof} We know that 
\[
\frac{\sigma\left(U_m\right)}{U_m} = \prod_{p^e \pdiv U_m}\left(1 + \frac{1}{p} + \cdots + \frac{1}{p^e}\right) \leq \exp\left(\sum_{p \mid U_m} \frac{1}{p-1}\right).
\]
We also know that if $p \mid U_m$, it implies that $z\left(p\right) > 1$ where $z\left(p\right)$ is the content of $p$). If $p \nmid g-1$, then $z\left(p\right) \mid p-1$.
\[
\sum_{p \mid U_m} \frac{1}{p-1} \leq \sum_{p \mid g-1} \frac{1}{p-1} + \sum_{\substack{d \mid m \\ d > 1}} \sum_{\substack{p \mid U_d \\ p \equiv 1 \Mod{d}}} {\frac{1}{p-1}}
\]
If $d$ is a divisor of $m$ and $U_{d} < g^d$, there exists a progression $1 \Mod{d}$. And hence, it is bounded by $\frac{\log\left(g^d\right)}{log\left(d\right)}$. So, we may apply this in the following:
\[
\sum_{\substack{p \mid U_d \\ p \equiv 1 \Mod{d}}} {\frac{1}{p-1}} \leq \sum_{1 \leq k \leq d \cdot \frac{\log g}{\log d}} {\frac{1}{dk}} \leq \frac{1}{d}\left[1 + \log\left(\frac{d \log g}{\log d}\right)\right] \ll_{g} \frac{\log\left(ed\right)}{d}. 
\]
This implies that 
\[
\sum_{\substack{p \mid U_d \\ p \equiv 1 \Mod{d}}} {\frac{1}{p-1}} \ll_{g} \sum_{d \mid m} {\frac{\log\left(ed\right)}{d}}.
\]
If we let $m = q_1^{\alpha_1}\cdot q_2^{\alpha_2} \cdots q_k^{\alpha_k}$ (where $k = \omega\left(m\right)$) and $d \mid m$, then $d = q_1^{\beta_1}\cdot q_2^{\alpha_2} \cdots q_k^{\alpha_k}, 0 \leq \beta_i \leq \alpha_i$. Note that the map from $x \mapsto \frac{\log\left(ex\right)}{x}$ is decreasing for $x \geq 1$. This implies that $\log\left(\frac{\log\left(ed\right)}{d}\right) \leq \frac{\log\left(ed^\prime\right)}{d^\prime}, d^\prime = p_1^{\beta_1}\cdot p_2^{\beta_2} \cdots p_k^{\beta_k}$, where $p_i$ is the $i^{th}$ prime. We can let $\mathcal{P}, $or a finite set of primes $= {p_1, p_2, \cdots, p_k}$. 
\[
\sum_{d^\prime \in \mathcal{P}^{*}} {\frac{\log\left(ed^\prime\right)}{d^\prime}} = \left(1 + \sum_{p \in \mathcal{P}} \frac{\log p}{p-1}\right)\left(\prod_{p \in \mathcal{P}}{\left(1 + \frac{1}{p}\right)}\right) \ll \left(\log\left(e \cdot \omega\left(m\right)\right)\right)^2
\]
\textit{Note: The conclusion resulted from Merten's Estimate and Prime Number Theorem} \cite{lucapollack}. 
\end{proof}

By combining everything, we obtain what we had originally wanted. Recall that we have only proven this when $m$ is a $2^s$-perfect number.
\subsection{Multiperfect numbers and factorials}
In this section, we collect a few simple observations regarding factorials and multiperfect numbers.
\begin{prop}
The only non-negative integer value of $n$ such that $n!$ is a perfect number is $3$.
\end{prop}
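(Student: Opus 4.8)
The plan is to separate the finitely many small factorials from a uniform treatment of all large $n$, exploiting the monotonicity of the abundancy index $h(m) := \sigma(m)/m$ under divisibility. The key elementary fact is the identity $h(m) = \sigma(m)/m = \sum_{d \mid m} 1/d$: writing the abundancy as a sum over divisors makes its behaviour transparent.

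First I would record the monotonicity lemma. If $a \mid b$, then every divisor of $a$ is also a divisor of $b$, so $h(a) = \sum_{d \mid a} 1/d \le \sum_{d \mid b} 1/d = h(b)$. In particular, since $(n-1)! \mid n!$ for every $n \ge 1$, the sequence $\left(h(n!)\right)_{n \ge 0}$ is non-decreasing. This single observation is what lets me avoid computing $\sigma(n!)$ for each $n$ separately.

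Next I would dispose of the small cases by direct computation: $h(0!) = h(1!) = 1$, $h(2!) = \sigma(2)/2 = 3/2$, and $h(3!) = \sigma(6)/6 = 12/6 = 2$. Thus among $n \le 3$ only $n = 3$ gives a perfect number, and indeed $3! = 6$ is perfect. Finally, for $n \ge 4$ I would note that $4! = 24 \mid n!$, so by the monotonicity lemma $h(n!) \ge h(24) = \sigma(24)/24 = 60/24 = 5/2 > 2$. Hence $\sigma(n!) > 2 \cdot n!$ for every $n \ge 4$, so $n!$ is strictly abundant and cannot be perfect. Combining the two ranges shows $n = 3$ is the unique solution.

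The argument is entirely elementary, and I do not anticipate a genuine obstacle. The only points requiring care are stating the divisibility monotonicity of $h$ correctly and handling the boundary values $n = 0, 1, 2$ by hand; once $h(24) = 5/2$ is computed, the strictness of the abundance for all $n \ge 4$ is automatic, so no further estimation is needed.
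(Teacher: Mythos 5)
Your proof is correct, but it takes a genuinely different route from the paper. The paper's argument invokes the Euclid--Euler structure theorem: since $n!$ is even for $n\geq 2$, a perfect $n!$ must have the form $2^{p-1}\left(2^p-1\right)$ with $p$ and $2^p-1$ prime; then $3\mid n!$ for $n\geq 3$ forces $3\mid 2^p-1$, and primality of $2^p-1$ gives $2^p-1=3$, so $p=2$ and $n!=6$. You instead avoid any classification of perfect numbers entirely: the divisor-sum identity $\sigma(m)/m=\sum_{d\mid m}1/d$ gives monotonicity of the abundancy index under divisibility, and the single computation $\sigma(24)/24=60/24=5/2>2$ shows $n!$ is abundant for all $n\geq 4$, with $n\leq 3$ checked by hand. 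Your version is more elementary (Euler's theorem on even perfect numbers is a nontrivial input, even if classical) and proves the strictly stronger statement that $n!$ is abundant for every $n\geq 4$; it also explicitly covers $n=0$, which the paper passes over silently even though the proposition is stated for non-negative $n$. The paper's proof is shorter once the structure theorem is granted and identifies exactly where the factorial structure collides with the Euclid--Euler form. Both are complete; your computations ($h(2!)=3/2$, $h(3!)=2$, $h(24)=5/2$) all check out.
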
 \begin{proof}
Clearly, $n=1$ does not work, so $n!$ must be even. We know that $$n! = 2^{p-1} \cdot\left(2^p-1\right)$$ where $p$ and $2^p-1$ are prime. Also we can assume $n \geq 3$ since $n=2$ also does not work. So $3 \mid {n! = 2^{p-1}*\left(2^p-1\right)} \implies 3 \mid {2^p-1} \implies 2^p-1 = 3 \implies p = 2 \implies n = 3$
\end{proof}
\begin{lemma}
For sufficiently large $n$, $\rad\left(n!\right) < \left(n!\right)^{\epsilon}$.
\end{lemma}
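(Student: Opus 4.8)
The plan is to estimate $\rad(n!)$ directly using the fact that it is precisely the product of all primes not exceeding $n$, and then compare this against any fixed positive power of $n!$. First I would observe that
\[
\rad(n!) = \prod_{p \leq n} p = e^{\vartheta(n)},
\]
where $\vartheta(n) = \sum_{p \leq n} \log p$ is Chebyshev's theta function. By the Prime Number Theorem (or even the weaker Chebyshev bounds), $\vartheta(n) \sim n$, so in particular $\rad(n!) = e^{(1+o(1))n}$, which grows only exponentially in $n$.

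Next I would produce a matching lower bound on the growth rate of $n!$ itself. By Stirling's approximation, $\log(n!) = n \log n - n + O(\log n)$, so $\log(n!) \sim n \log n$. The crucial point is the comparison of the two exponents: $\log \rad(n!) \ll n$ while $\log(n!) \sim n \log n$, so the ratio satisfies
\[
\frac{\log \rad(n!)}{\log(n!)} = \frac{(1+o(1))n}{(1+o(1))n \log n} \longrightarrow 0
\]
as $n \to \infty$. Fixing any $\epsilon > 0$, this ratio is eventually smaller than $\epsilon$, which is exactly the statement that $\rad(n!) < (n!)^{\epsilon}$ for all sufficiently large $n$.

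To make the argument self-contained without invoking the full Prime Number Theorem, I would instead cite Chebyshev's elementary bound $\vartheta(n) < (\log 4) \cdot n$, which gives $\rad(n!) < 4^n$, together with the trivial lower bound $n! \geq (n/2)^{n/2}$ (or Stirling). Taking logarithms, it suffices to check that $n \log 4 < \epsilon \cdot \tfrac{n}{2}\log(n/2)$, which holds once $\log(n/2) > \tfrac{2 \log 4}{\epsilon}$, i.e.\ for all $n$ larger than an explicit threshold depending on $\epsilon$.

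I do not anticipate a genuine obstacle here, as the two growth rates are separated by a full factor of $\log n$; the only care needed is to keep the lower bound on $n!$ elementary so the lemma does not secretly rest on more than advertised. The cleanest writeup uses the Chebyshev bound on $\vartheta(n)$ for the numerator and Stirling for the denominator, reducing everything to the single inequality comparing $n$ against $\epsilon \, n \log n$.
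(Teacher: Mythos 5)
Your proposal is correct and follows essentially the same route as the paper: both bound $\rad(n!)$ by the product of the primes up to $n$, which grows only exponentially in $n$ (the paper writes $\rad(n!) < n^{n/\log n}$, which equals $e^{n}$, while you write $e^{\vartheta(n)} < 4^{n}$ via Chebyshev), and then compare against Stirling's $\log n! \sim n \log n$. If anything, your writeup is tighter than the paper's, whose closing chain ``$n^{1/\log n} \approx n^{\epsilon}$'' is garbled (that quantity is the constant $e$), so your elementary Chebyshev--Stirling comparison makes rigorous exactly the step the paper leaves heuristic.
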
 \begin{proof}
By Stirling's approximation, $n!$ can be estimated as $\left(\frac{n}{e}\right)^n$. By the prime number theorem, the number of primes less than $n$ can be estimated as $\frac{n}{\log{n}}$. So,$$\rad\left(n!\right) < n^{\frac{n}{\log{n}}}.$$It suffices to show that $$n^{\frac{n}{\log{n}}} \ll \left(\frac{n}{e}\right)^n.$$ Clearly, if $n$ is sufficiently large: 

$$n^{\frac{1}{\log n}} \approx n^{\epsilon} \approx \left(\frac{n}{e}\right)^{\epsilon} \implies n^{\frac{n}{\log n}} \approx \left(\frac{n^n}{e^n}\right)^{\epsilon} \implies \rad\left(n!\right) < n!^\epsilon.$$ \end{proof}
\begin{prop}
For a multiperfect number $m$ where $\sigma(m) = km,$ there exists at most one multiperfect number that can also be expressed as $n!$ for some $n$ for any natural number $k \geq 2.$
\end{prop}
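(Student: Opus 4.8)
The plan is to prove the statement by showing that the abundancy index $\frac{\sigma\left(n!\right)}{n!}$, regarded as a function of $n$, is strictly increasing; injectivity of a strictly monotone function then immediately forces each fixed value $k$ to be attained by at most one factorial. Since being $k$-perfect means precisely $\frac{\sigma\left(n!\right)}{n!} = k$, this is exactly the desired conclusion.

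First I would recall the identity $\frac{\sigma\left(N\right)}{N} = \sum_{d \mid N} \frac{1}{d}$, which is already invoked earlier in this paper, and apply it with $N = n!$ to write $\frac{\sigma\left(n!\right)}{n!} = \sum_{d \mid n!} \frac{1}{d}$. The task then reduces to comparing these divisor sums for consecutive factorials.

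The key observation is that $n! \mid \left(n+1\right)!$, so every divisor of $n!$ is also a divisor of $\left(n+1\right)!$; in other words, the set of divisors of $n!$ is contained in the set of divisors of $\left(n+1\right)!$. Moreover this containment is proper, since $\left(n+1\right)!$ divides itself but exceeds $n!$ and hence cannot divide $n!$. Comparing the sums of reciprocals over these two nested divisor sets therefore gives $\sum_{d \mid \left(n+1\right)!} \frac{1}{d} > \sum_{d \mid n!} \frac{1}{d}$, so that $\frac{\sigma\left(\left(n+1\right)!\right)}{\left(n+1\right)!} > \frac{\sigma\left(n!\right)}{n!}$.

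Finally, strict monotonicity makes the map $n \mapsto \frac{\sigma\left(n!\right)}{n!}$ injective, so for any fixed $k \geq 2$ the equation $\frac{\sigma\left(n!\right)}{n!} = k$ admits at most one solution $n$, i.e.\ at most one factorial is $k$-perfect. The only step demanding a little care is the \emph{strictness} of the increase, namely verifying that at least one genuinely new divisor appears when passing from $n!$ to $\left(n+1\right)!$; but this is secured at once by exhibiting $\left(n+1\right)!$ itself as such a divisor, so I expect no substantial obstacle in the argument.
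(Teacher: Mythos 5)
Your proof is correct, and it reaches the conclusion by a genuinely different decomposition than the paper's. The paper fixes the prime factorizations $n! = 2^{\alpha_1}3^{\alpha_2}\cdots p_r^{\alpha_r}$ and $n'! = 2^{\beta_1}\cdots p_c^{\beta_c}$ with $n' > n$, observes that $\alpha_i \leq \beta_i$ and $c \geq r$, and compares the multiplicative expansion $k = \prod_i \left(1 + \frac{1}{p_i} + \cdots + \frac{1}{p_i^{\alpha_i}}\right)$ bracket by bracket, concluding the product for $n'!$ is strictly larger; that is, it proves monotonicity of the abundancy through the Euler-product form of $\frac{\sigma(m)}{m}$. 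You instead use the additive identity $\frac{\sigma(N)}{N} = \sum_{d \mid N} \frac{1}{d}$ (which the paper itself invokes in the multirepdigit section) together with the proper containment of divisor sets coming from $n! \mid (n+1)!$. Both arguments rest on the same underlying fact --- abundancy strictly increases along the divisibility chain of factorials --- but your route buys two things: strictness is exhibited explicitly by naming the new divisor $(n+1)!$, whereas the paper only asserts that $m \neq m'$ forces strict inequality in the term-wise comparison (true, but left implicit that some bracket genuinely grows); and your argument actually proves the more general statement that $a \mid b$ with $a < b$ implies $\frac{\sigma(a)}{a} < \frac{\sigma(b)}{b}$, with factorials playing no role beyond $n! \mid n'!$. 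One pedantic caveat: your strictness step requires $n \geq 1$, since $0! = 1!$ means $(n+1)!$ is not a new divisor at $n = 0$; but as the two factorials are then the same integer, whose abundancy is $1 < k$, the conclusion for $k \geq 2$ is unaffected.
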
 
 \begin{proof}
Let $n! = m = 2^{\alpha_1}3^{\alpha_2}\dots{p_r}^{\alpha_r}$, where $m$ is the smallest multiperfect number such that $\sigma\left(m\right)=km $ for a given $k$. Then, 
\[km = kn! = k2^{\alpha_1}3^{\alpha_2}\dots{p_r}^{\alpha_r} = \sigma\left(m\right)\]\[ = \left(1+2+\dots+2^{\alpha_1}\right)\left(1+3+\dots+2^{\alpha_1}\right)\dots\left(1+p_r+\dots+{p_r}^{\alpha_r}\right)\] Assume that there exists $n'! = m'= 2^{\beta_1}3^{\beta_2}\dots{p_c}^{\beta_c}$ , where $n'>n$ and $c\geq r$, such that $m'$ is the second smallest k-multiperfect number. We have $\beta_i \geq \alpha_i$ for every $i\leq c$, because $n'>n$. Hence,
\[k=\left(1+\frac{1}{2}+\dots+\frac{1}{2^{\alpha_1}}\right)\left(1+\frac{1}{3}+\dots+\frac{1}{3^{\alpha_2}}\right)\dots\left(1+\frac{1}{p_r}+\dots+\frac{1}{{p_r}^{\alpha_r}}\right)\]  \[=\left(1+\frac{1}{2}+\dots+\frac{1}{2^{\beta_1}}\right)\left(1+\frac{1}{3}+\dots+\frac{1}{3^{\beta_2}}\right)\dots\left(1+\frac{1}{p_c}+\dots+\frac{1}{{p_c}^{\beta_c}}\right)\] When comparing each bracket on both side of the equation, for every $i \leq r$, since $v_{p_i}\left(m\right)\leq v_{p_i}\left(m'\right), $ or in other words $\alpha_i \leq \beta_i$,
\[\left(1+\frac{1}{p_i}+\dots+\frac{1}{{p_i}^{\alpha_i}}\right)\leq \left(1+\frac{1}{p_i}+\dots+\frac{1}{{p_i}^{\beta_i}}\right).\] For every $r<j\leq c$, 
\[\left(1+\frac{1}{p_j}+\dots+\frac{1}{{p_j}^{\beta_j}}\right)>1.\] Since $n!=m\neq \left(n'\right)!=m'$,
\[\left(1+\frac{1}{2}+\dots+\frac{1}{2^{\alpha_1}}\right)\left(1+\frac{1}{3}+\dots+\frac{1}{3^{\alpha_2}}\right)\dots\left(1+\frac{1}{p_r}+\dots+\frac{1}{{p_r}^{\alpha_r}}\right)\]\[<\left(1+\frac{1}{2}+\dots+\frac{1}{2^{\beta_1}}\right)\left(1+\frac{1}{3}+\dots+\frac{1}{3^{\beta_2}}\right)\dots\left(1+\frac{1}{p_c}+\dots+\frac{1}{{p_c}^{\beta_c}}\right)\] which is impossible. Hence, for every given natural number $k$, there exists at most one multiperfect number $m$ such that $\sigma\left(m\right)=km$ as desired. \end{proof}


Next result concerns shifted factorials, that have been extensively studied by many authors throughout the years.
\begin{prop}
Assume the ABC conjecture is true. There exists finitely many $n$ such that $n!+1$ is a multi-perfect number.
\end{prop}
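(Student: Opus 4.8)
The plan is to exploit the elementary fact that every prime divisor of $n!+1$ exceeds $n$, which pins the abundancy down to a bounded value, and then to feed the resulting radical bound into the ABC conjecture applied to the trivial relation $n! + 1 = n! + 1$. First I would record the structural facts. For $n \geq 2$ the number $m = n!+1$ is odd, and for every prime $p \leq n$ we have $n! \equiv 0$, hence $m \equiv 1 \pmod p$; thus every prime factor of $m$ is strictly larger than $n$. Writing $\sigma(m) = km$, this lets me bound the abundancy, since
\[
k = \frac{\sigma(m)}{m} < \prod_{p \mid m} \frac{p}{p-1}.
\]
Because $m \geq \prod_{p\mid m} p > n^{\omega(m)}$, Stirling's estimate $\log(n!) < n\log n$ gives $\omega(m) < \log m / \log n < n$ for large $n$, while each $p-1 \geq n$. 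Hence $\log k < \sum_{p \mid m}\frac{1}{p-1} \leq \omega(m)/n < 1$, so $k < e$ and therefore $k = 2$ once $n$ is large. In particular $k \equiv 2 \pmod 4$, and Theorem~\ref{oddmain} yields the fixed bound $\rad(m) < m^{9/14}$. (If one takes the convention that multiperfect requires $k > 2$, the conclusion $k = 2$ already rules out solutions for large $n$ and ABC is superfluous; I present the ABC argument so as to cover the perfect case $k=2$ as well.)

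The bounded-abundancy step is the crux, and where I expect the main obstacle to lie. Theorem~\ref{oddmain} only provides a radical exponent bounded away from $1$ when $\nu_2(k)$ is bounded, since the exponent $\frac{4\nu_2(k)+1}{4\nu_2(k)+4}$ tends to $1$; without the observation that all prime factors of $n!+1$ exceed $n$, the radical bound would be too weak to conclude anything. I would therefore be careful to make the estimate $\omega(m) < n$ fully explicit and to check that the crude inequalities $-\log(1-1/p) < \frac{1}{p-1}$ and $p - 1 \geq n$ indeed force $\log k < 1$.

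Finally I would apply the ABC conjecture to the pairwise coprime triple $a = n!$, $b = 1$, $c = n!+1$ (consecutive integers are coprime, and $\gcd(n!,1)=1$), obtaining
\[
n! + 1 \le C_\epsilon \bigl(\rad(n!)\,\rad(n!+1)\bigr)^{1+\epsilon}.
\]
Substituting the lemma $\rad(n!) < (n!)^\epsilon \le m^\epsilon$ together with the bound $\rad(m) < m^{9/14}$ just established produces
\[
m \le C_\epsilon\, m^{(\epsilon + 9/14)(1+\epsilon)}.
\]
Since $(\epsilon + 9/14)(1+\epsilon) \to 9/14 < 1$ as $\epsilon \to 0$, fixing $\epsilon$ small enough that this exponent is strictly below $1$ forces $m^{1 - (\epsilon + 9/14)(1+\epsilon)} \le C_\epsilon$, so $m$, and hence $n$, is bounded. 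Therefore only finitely many $n$ can make $n!+1$ multiperfect, as claimed.
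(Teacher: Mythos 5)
Your proof is correct, and it is in fact more complete than the paper's own argument. The paper applies the ABC conjecture to the same triple $n! + 1 = m$, inserts $\rad(n!) < (n!)^{\epsilon}$ together with the bound $\rad(m) < m^{\frac{4t+1}{4t+4}}$ from Theorem~\ref{oddmain}, and then simply asserts that a suitable $\epsilon$ can be chosen; but, as you correctly flag, that exponent tends to $1$ as $t = \nu_2(k)$ grows, so no single $\epsilon$ works unless $\nu_2(k)$ is bounded uniformly over all putative solutions --- a point the paper never addresses (it even describes $t$ as the $2$-adic valuation of $m$, which is odd for $n \geq 2$), leaving a genuine gap. Your bounded-abundancy lemma is exactly the missing ingredient: since every prime factor of $m = n!+1$ exceeds $n$, you get $\log k < \sum_{p \mid m} \frac{1}{p-1} \leq \omega(m)/n < 1$ for large $n$, hence $k = 2$, so $m$ is odd with $k \equiv 2 \pmod 4$ and Theorem~\ref{oddmain} gives the fixed exponent $\frac{9}{14}$, after which your ABC step closes the argument exactly as the paper intends. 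Two remarks. First, under the paper's own convention that multiperfect means $k > 2$, your bound $k < e$ already settles the multiperfect case \emph{unconditionally} for large $n$, with ABC needed only for the residual perfect case $k = 2$; this strengthening deserves to be stated prominently rather than parenthetically. Second, a small repair: the chain $\omega(m) < \log m / \log n < n$ needs slightly more than $\log(n!) < n \log n$ (which only gives $\omega(m) < n + O(1)$); either invoke the sharper Stirling bound $\log n! \leq n\log n - n + O(\log n)$, or note that $\omega(m) \leq n$ suffices, since the inequality $k < \prod_{p \mid m} \frac{p}{p-1}$ is strict and so $\log k < 1$ still follows. Neither issue affects the soundness of your argument.
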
 \begin{proof}
Let $n!+1 = m$, where $m$ is a multi-perfect number. By the ABC conjecture, $$m \leq C_{\epsilon} \rad\left(1 \cdot n! \cdot m\right)^{1+\epsilon} $$ $$\implies m \leq C_{\epsilon} \rad\left(n!\right)^{1+\epsilon} \rad\left(m\right)^{1+\epsilon} \leq C_{\epsilon} n!^{\epsilon} \rad\left(n!\right)^{\epsilon} \rad\left(m\right)^{\epsilon} m^{\frac{4t+1}{4t+4}}$$ where $t$ is the largest power of $2$ that divides m. Clearly, we can now choose an $\epsilon > 0$ such that this does not hold. So, we have a contradiction, implying that there are finitely many multiperfect numbers of the form $n!+1$.
\end{proof}


The Wheatley School.

{\it Email address}: virajjayam@gmail.com

Thomas Jefferson High School for Science and Technology.

{\it Email address}: ajitkadaveru@yahoo.com

Acton Boxborough Regional High School

{\it Email address}: thinkinavi@gmail.com

The Experimental High School Attached to Beijing Normal
University.

{ \it Email address}: carrie\_zhang2001@hotmail.com


\begin{thebibliography}{}

\bibitem{acquaahkonyagin}
Acquaah, Peter, and Sergei Konyagin. 
\textit{On prime factors of odd perfect numbers}. 
International Journal of Number Theory 8.06 (2012): 1537-1540.

\bibitem{klurman} 
Klurman, Oleksiy.
\textit{Radical of Perfect Numbers and Perfect Numbers among Polynomial Values}. 
International Journal of Number Theory 12.03 (2016): 585-591.

\bibitem{lucap}
Luca, Florian, and Carl Pomerance. 
\textit{On the radical of a perfect number}.
New York J. Math 16 (2010): 23-30.

\bibitem{ellia}
Ellia, Philippe.
\textit{On the distance between perfect numbers}. arXiv preprint arXiv:1210.0450 (2012).

\bibitem{zhoubroughan}
Broughan, Kevin A., and Zhou, Qizhi.
\textit{Odd multiperfect numbers of abundancy 4}. 
Journal of Number Theory 128.6 (2008): 1566-1575.


\bibitem{lucapollack}
Luca, Florian, and Pollack, Paul.
\textit{Multiperfect Numbers with Identical Digits}. 
Journal of Number Theory 131.2 (2011): 260-284.


\bibitem{Pollack}
Pollack, Paul.
\textit{Perfect Numbers with Identical Digits}. 
Integers 11.4 (2011): 519-529.


\bibitem{luca}
Luca, Florian. 
\textit{Perfect Fibonacci and Lucas numbers}.
Rendiconti del Circolo Matematico di Palermo 49.2 (2000): 313-318.

\bibitem{Ellia}
Ellia, Filippo Alfredo.
\textit{A remark on the radical of odd perfect numbers}. 
The Fibonacci Quarterly 50.3 (2012): 231-234.

\bibitem{Ochem}
Ochem, Pascal, and Rao, Michaël.
\textit{Another remark on the radical of an odd perfect number}. 
Fibonacci Quart 52.3 (2014): 215-217.

\bibitem{RD}
Carmichael, R. D. 
\textit{A table of multiply perfect numbers}. 
Bulletin of the American Mathematical Society 13.8 (1907): 383-386.

\bibitem{Sandor}
Sandor, J., and Crstici, B.
\textit{Perfect numbers: Old and new issues; perspectives}.
Handbook of Number Theory II. Springer, Dordrecht, 2004. 15-96.

\bibitem{website}
Flammenkamp, A. 
\textit{Multiply perfect numbers}.
http://wwwhomes.uni-bielefeld.de/achim/mpn.html,2018.



\end{thebibliography}
\end{document}